\def\Tor{\operatorname{Tor}}
\DeclareMathOperator{\reg}{reg}
\theoremstyle{plain}
\newtheorem{theorem}{Theorem}[section]
\newtheorem{definition}[theorem]{Definition}
\newtheorem{lemma}[theorem]{Lemma}
\newtheorem{observation}[theorem]{Observation}
\begin{document} 

\title{Regularity of Path Ideals of Gap Free Graphs}
\date{\today}

\author[Banerjee]{Arindam Banerjee}
\address{Department of Mathematics, University of Virginia,
Charlottesville, VA, USA} \email{ab4cb@virginia.edu}

\subjclass[2000]{Primary 13-02, 13F55, 05C10}

\baselineskip 16pt \footskip = 32pt

\begin{abstract}
In this paper we study the Castelnuovo-Mumford regularity of the path ideals of finite simple graphs. We find new upper bounds for various path ideals of gap free graphs. In particular we prove that the path ideals of gap free and claw graphs have linear minimal free resolutions.
\end{abstract}

\maketitle \markboth{A.Banerjee}
{Bounds on regularity}

\section{Introduction}
\bigskip

 In this work we study the Castelnuovo-Mumford regularity of various path ideals related to the finite simple graphs. For any finite simple graph $G$ and for any integer $t\geq 3$, the $t$-path ideal $I_t(G)$ (or simply $I_t$ if the choice for $G$ is understood) of $G$ is the square free monomial ideal generated by the $t$-paths of $G$ in the polynomial ring on the vertices of $G$ (defined in [5]). Various studies have been done on the regularity of a closely related ideal, the so called edge ideal $I(G)$, which is the ideal generated by the edges of $G$. In $[8]$, Fr\"oberg completely characterized the case where $I(G)$ has minimum possible regularity, the case where it has linear minimal free resolution. In [11], Herzog, Hibi and Zheng proved that if an edge ideal has linear resolution so do its powers. In [13], Nevo studied the ``first'' case where $I(G)$ may not have a linear resolution. He proved that if $G$ is both gap free and claw free then $I(G)$ has regularity less than or equal to $3$ and $I(G)^2$ has linear resolution. The author of this paper in generalized Nevo's result in [3] by proving if $G$ is gap free and cricket free (see definition) then $I(G)^n$ has linear minimal free resolution for every $n\geq 2$. Compare to these, the case of the path ideal seems to be relatively less explored but significant works on the regularity of the path ideals have been done in some recent works (for example in [1], [2] and [4]).\\
 
  Motivated by the above mentioned works on the edge ideal we look for results of the similar flavor in the case of the path ideals. In particular we look for the cases where some condition on $G$ and $I(G)$ guarantees linear resolutions for various path ideals. We prove various results of this type in the cases where there are results about the edge ideals, for example the gap free graphs, the claw free graphs and the cricket free graphs. Our main result is the following (compare this to the work done in [3] or [13]):\\   
  
\begin{theorem}
If $G$ is gap free and claw free then for every $t \geq 3$ the squarefree monomial ideal generated by $t$-paths $I_t(G)$ is either the zero ideal or has linear minimal free resolution.
\end{theorem}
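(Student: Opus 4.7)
The plan is to prove $\reg(I_t(G)) = t$ by adapting the colon-ideal / short-exact-sequence machinery that the author developed in [3] for powers of edge ideals. The overall strategy is induction on the number of $t$-paths of $G$, with the gap-free and claw-free hypotheses invoked to control the colon ideals that appear at each step.

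I would first order the $t$-paths of $G$ as $p_1,\ldots,p_m$ and, setting $I_k = (p_1,\ldots,p_k)$, aim to establish that $I_t(G) = I_m$ has linear quotients with respect to this ordering. A natural ordering to try is a lexicographic one induced by a vertex ordering chosen to respect the local neighborhood structure. The relevant short exact sequence is
$$
0 \longrightarrow R/(I_{k-1} : p_k)\,(-t) \xrightarrow{\;\cdot p_k\;} R/I_{k-1} \longrightarrow R/I_k \longrightarrow 0,
$$
which yields $\reg(R/I_k) \leq \max\bigl\{\reg(R/I_{k-1}),\, \reg(R/(I_{k-1} : p_k)) + t - 1\bigr\}.$ A linear resolution of $I_t(G)$ follows once we show $\reg(R/(I_{k-1} : p_k)) = 0$ for every $k$, i.e., that this colon is generated by variables.

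The core of the argument is then the analysis of $(I_{k-1} : p_k)$, whose minimal generators come from the reduced monomials $p_j/\gcd(p_j,p_k)$ for $j < k$. I would exploit the combinatorial hypotheses as follows. Gap-freeness forbids two vertex-disjoint $t$-paths with no edge between them, so whenever $p_j$ and $p_k$ are vertex-disjoint there must exist an edge $uv$ with $u \in p_j$ and $v \in p_k$; using such an edge, one can produce an intermediate $t$-path already lying in $I_{k-1}$ (for the right choice of ordering) that absorbs the high-degree contribution of $p_j/\gcd(p_j,p_k)$, leaving only a variable contribution. Claw-freeness at each internal vertex of $p_k$ controls the local neighborhood, forcing the ``extension graph'' at the endpoints of $p_k$ to itself be gap-free; Nevo's theorem in [13] and Fr\"oberg's characterization in [8] can then be applied to the quadratic generators that remain in the colon.

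The main obstacle I anticipate is producing a single ordering of the $t$-paths for which all of these cancellations occur simultaneously. A naive ordering will leave minimal generators of $(I_{k-1} : p_k)$ of intermediate degree with no clear way to rewrite them. Addressing this will require a detailed case analysis of how two $t$-paths can intersect in a gap-free claw-free graph, and showing in each case that either the resulting colon generator is a single variable, is already contained in $I_{k-1}$ after substitution, or can be reduced further by iterating the construction using an extension edge guaranteed by the gap-free hypothesis. Once this combinatorial bookkeeping is in place, chaining the short exact sequences yields $\reg(I_t(G)) = t$, proving the theorem.
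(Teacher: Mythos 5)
Your framework (chaining short exact sequences over an ordering of the generators and reducing everything to the claim that each colon ideal is generated by variables) is sound in principle, but the proposal has two genuine gaps. First, the entire combinatorial core --- the proof that $(I_{k-1}:p_k)$ is generated by variables --- is deferred: you acknowledge that you do not know which ordering of the $t$-paths makes this work, and no ordering is exhibited. This is not bookkeeping; it is the theorem. The paper sidesteps the ordering problem entirely by colonning with the generators of the \emph{larger} ideal $I_{t-1}$ rather than with the generators of $I_t$ itself: by Lemma 2.10 it suffices to show that $(I_t:m_k)+(m_1:m_k)+\cdots+(m_{k-1}:m_k)$ is generated by variables for the $(t-1)$-paths $m_1,\dots,m_k$ taken \emph{in any order}, and this is exactly what Lemmas 3.8 and 3.9 deliver: for any $(t-1)$-path $e$ the colon $(I_t:e)$ is generated by variables, and for any two $(t-1)$-paths $e\neq f$ the colon $(e:f)$ is either a single variable or is contained in $(I_t:f)$. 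Induction on $t$, with the base case $t=3$ handled by Theorem 3.2 together with Theorem 2.12 (claw free implies cricket free), then closes the argument. To complete your route you would need either an explicit linear-quotient order for the $t$-paths or order-independent statements of the above kind; neither is supplied.

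Second, your proposal is internally inconsistent at the point where you invoke Fr\"oberg and Nevo ``for the quadratic generators that remain in the colon.'' If $(I_{k-1}:p_k)$ retains minimal generators of degree two, then $\reg\bigl(R/(I_{k-1}:p_k)\bigr)\geq 1$ even when those quadrics generate an ideal with linear resolution, and your own inequality then only yields $\reg(R/I_k)\leq t$, i.e.\ $\reg(I_t(G))\leq t+1$, which is not a linear resolution. The quadratic-colon analysis in the paper (Lemmas 3.3--3.5 and Theorems 3.6--3.7) arises when one colons $I_4$ by an \emph{edge}, where the twist in the short exact sequence is $2$ rather than $t$, so a colon ideal of regularity $2$ is affordable there; it is not affordable in your setup. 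To make your approach work you must show that no quadrics survive in the colons at all, which returns you to the first gap.
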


 In order to prove these results, we study various colon ideals related to the $I_{t+1}$, the monomials representing $t$-paths and the edges in $G$. We show that under various assumptions on $G$ these colon ideals take very interesting forms and they behave nicely for regularity. Using these observations and various short exact sequences we arrive at several bounds for $\text{reg}(I_{t+1}(G))$ for different values of $t$.\\
 
 Finally the path ideals of simple graphs can also be visualized as the edge ideals of a hypergraph (we thank H. T\`ai H\`a for bringing this to our notice). People have studied the regularity of the edge ideals of hypergraphs and their relation with the combinatorics of the hypergraph. We refer to [9] and [10] for studies in that area.\\

\bigskip

\section{Preliminaries}

Throughout this paper, we let $G$ be a finite simple graph with vertex set $V(G)$  and set of edges $E(G)$. 
A subgraph $G' \subseteq G$ is called \emph{induced} if $uv$ is an edge of $G'$ whenever $u$ and $v$ are vertices of $G'$ and $uv$ is an edge of $G$.

The \emph{complement} of a graph $G$, for which we write $G^c$, is the graph on the same vertex set in which $uv$ is an edge of $G^c$ if and only if it is not an edge of $G$.  

Finally, let $C_k$ denote the cycle on $k$ vertices, and we let $K_{m, n}$ denote the complete bipartite graph with $m$ vertices on one side, and $n$ on the other. The complement of a cycle is called anticycle.  

\begin{definition}
Let $G$ be a graph.  We say two edges $uv$ and $xy$ form a \emph{gap} in $G$ if $G$ does not have an edge with one 
endpoint in $\{u,v\}$ and the other in $\{x,y\}$.
A graph without gaps is called \emph{gap-free}.  Equivalently, $G$ is gap-free if and only if $G^c$ contains no induced $C_4$.
\end{definition}

Thus, $G$ is gap-free if and only if it does not contain two vertex-disjoint edges as an induced subgraph.  


\begin{definition}
Any graph isomorphic to $K_{1, 3}$ is called a \emph{claw}. 
A graph without an induced claw is called \emph{claw-free}. 
\end{definition}

\begin{definition}
Any graph isomorphic to the graph with set of vertices $\{w_1,w_2,w_3,w_4,\\w_5\}$ and set of edges $\{w_1w_3,w_2w_3,w_3w_4,w_3w_5,w_4w_5\}$ is called a cricket.
A graph without an induced cricket is called \emph{cricket-free}.  As a cricket contains a claw, a claw free graph is also cricket free.\\ 
\end{definition}

\begin{definition}
An edge in a graph is called a whisker if any of its vertices has degree one.
\end{definition}


 If $G$ is a graph without isolated vertices then let $S$ denote the polynomial ring on the vertices of $G$ over some fixed field $K$.  Recall that the \emph{edge ideal} of $G$ is 
\[
I(G) = (xy: xy \text{ is an edge of } G).
\]
The $t-$path ideal of $G$ for $t\geq 3$ is
\[I_t(G)=(x_1x_2....x_t: x_1...x_t \text{ is a path of length t in } G).
\]

\begin{definition}
Let $S$ be a standard graded polynomial ring over a field $K$. The Castelnuovo-Mumford regularity of a finitely generated graded $S$ module $M$, written $\reg(M)$ is given by $$\reg(M):= \max \{j-i|\Tor_{i} (M,K)_j \neq 0 \}$$
\end{definition}

\begin{definition}
Let $I$ be a monomial ideal generated in degree $d$. We say that $I$ is \emph{$k$-steps linear} whenever the minimal free resolution of $I$ over the polynomial ring
is linear for $k$ steps, i.e., $\Tor_{i}^S(I,K)_j = 0$ for all $1\leq i\leq k$ and all $j\ne i+d$. We say $I$ has linear minimal free resolution if the minimal free resolution is $k$-steps linear for all $k \geq 1$.
\end{definition}

 We recall a few well known results. We refer reader to [3] and [6] for reference.

\begin{observation} 
Let $I$ be a monomial ideal generated in degree $d$. Then $I$ has linear minimal free resolution if and only if $\reg (I)=d$.
\end{observation}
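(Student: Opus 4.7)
The plan is to chase through the definition of $\reg(I)$ using the basic shape of a graded minimal free resolution of an ideal generated in a single degree. I would aim to show both that $\reg(I)\geq d$ always holds and that linearity of the resolution is equivalent to having equality.

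First, I would record that $\reg(I)\geq d$ comes for free. Since $I$ is generated in degree $d$, the minimal generators form a $K$-basis of $I/\mathfrak{m}I\cong \Tor_0^S(I,K)$ which sits entirely in degree $d$. Thus $\Tor_0^S(I,K)_d\neq 0$, which contributes the value $d-0=d$ to the maximum defining $\reg(I)$.

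Next, I would establish the complementary direction: $\Tor_i^S(I,K)_j\neq 0$ forces $j\geq i+d$. Writing the $i$-th term of the minimal free resolution as $F_i=\bigoplus_j S(-j)^{\beta_{i,j}}$, minimality means the differentials have entries in $\mathfrak{m}$, so a generator of $F_i$ of internal degree $j$ can only hit generators of $F_{i-1}$ of strictly smaller internal degree. Starting from $F_0=S(-d)^{\beta_{0,d}}$ and inducting on $i$, any $\beta_{i,j}\neq 0$ requires some $\beta_{i-1,j'}\neq 0$ with $j'\leq j-1$, hence $j\geq d+i$.

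Putting the two directions together gives the equivalence. If the resolution is linear then $\beta_{i,j}=0$ for all $j\neq i+d$, so the maximum of $j-i$ over nonzero Betti numbers is exactly $d$, i.e.\ $\reg(I)=d$. Conversely, if $\reg(I)=d$ then every nonzero $\beta_{i,j}$ satisfies $j-i\leq d$; combined with $j\geq i+d$ from the previous step, we get $j=i+d$ whenever $\beta_{i,j}\neq 0$, which is precisely the definition of a linear resolution. The only slightly nontrivial ingredient is the standard fact about consecutive shifts in a minimal resolution, which follows directly from minimality, so I do not anticipate any real obstacle here.
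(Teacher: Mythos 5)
Your proof is correct and is the standard argument: $\reg(I)\geq d$ from $\Tor_0^S(I,K)_d\neq 0$, plus the fact that minimality forces $\beta_{i,j}=0$ for $j<i+d$, so $\reg(I)=d$ is equivalent to all shifts being exactly $i+d$. The paper does not prove this Observation at all (it is recalled as well known, with references to [3] and [6]), and your write-up supplies precisely the argument those sources give.
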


\begin{lemma}
Let $I \subseteq S$ be a monomial ideal. Then for any variable $x$, $\reg(I,x)\leq \reg(I)$. In particular if $v$ is a vertex in a graph $G$, then $\reg (I(G-v))\leq \reg((I(G))$.
\end{lemma}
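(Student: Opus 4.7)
The plan is to establish the inequality directly by reducing to the squarefree case via polarization and then comparing multigraded Betti numbers via Hochster's formula.

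First, I would polarize $I$ to obtain a squarefree monomial ideal $\tilde I$ in a larger polynomial ring; polarization preserves Castelnuovo-Mumford regularity and commutes with adjoining the variable $x$ (since $x$ is itself squarefree of degree $1$), so $\reg(I,x) = \reg(\tilde I, x)$ and $\reg(I) = \reg(\tilde I)$. Hence it suffices to prove the inequality assuming $I$ is squarefree, and we write $I = I_\Delta$ for the Stanley-Reisner complex $\Delta$ on the vertex set $V$.

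Next, observe that $(I, x) = I_{\Delta''}$, where $\Delta'' := \{F \in \Delta : x \notin F\}$ is the deletion of $x$ from $\Delta$. For each subset $W \subseteq V$, if $x \notin W$ then $\Delta''|_W = \Delta|_W$, so Hochster's formula gives $\beta_{i, W}(S/(I, x)) = \beta_{i, W}(S/I)$; if $x \in W$ then $\Delta''|_W = \Delta|_{W \setminus \{x\}}$, so Hochster's formula gives $\beta_{i, W}(S/(I, x)) = \beta_{i - 1, W \setminus \{x\}}(S/I)$, using $|W| - i - 1 = |W \setminus \{x\}| - (i - 1) - 1$. In both cases the regularity contribution $|W| - i$ coincides with a nonzero contribution to $\reg(S/I)$, so $\reg(S/(I, x)) \leq \reg(S/I)$ and hence $\reg(I, x) \leq \reg(I)$. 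The main obstacle is keeping this multigraded Betti-number bookkeeping clean; polarization and Hochster's formula themselves are standard tools.

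For the ``in particular'' statement, observe that $(I(G), v) = (I(G-v), v)$ in $S$, since every edge of $G$ incident to $v$ already lies in $(v)$. Hence $S/(I(G), v) \cong S'/I(G-v)$ where $S' = S/(v)$, and a standard change-of-rings argument using the Koszul resolution of $S'$ over $S$ yields $\reg_S(S/(I(G), v)) = \reg_{S'}(S'/I(G-v))$. It follows that $\reg(I(G-v)) = \reg(I(G), v)$, and combining with the first part applied to $I = I(G)$, $x = v$ gives $\reg(I(G-v)) \leq \reg(I(G))$, as desired.
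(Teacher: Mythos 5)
The paper offers no proof of this lemma: it is recalled as a ``well known result'' with a pointer to the references [3] and [6], so there is nothing internal to compare against. Your argument is correct and is essentially the standard proof one finds in those sources (the ``restriction lemma'' for squarefree monomial ideals via Hochster's formula, combined with polarization to handle the general case). The bookkeeping checks out: $(I_\Delta,x)=I_{\Delta''}$ with $\Delta''$ the deletion of $x$, and for $x\in W$ one has $\Delta''|_W=\Delta|_{W\setminus\{x\}}$, so $\beta_{i,W}(S/(I,x))=\beta_{i-1,W\setminus\{x\}}(S/I)$ with the same value of $|W|-i$, giving $\reg(S/(I,x))\le\reg(S/I)$. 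The only place a referee might ask for one more line is the assertion that polarization commutes with adjoining $x$: since $x$ may occur with exponent greater than one in generators of $I$, you should say explicitly that $x$ is identified with the first of its polarization variables $x_{1}$, and observe that both $(\tilde I,x_{1})$ and the polarization of $(I,x)$ cut out (up to extraneous free variables, which do not affect regularity) the polarization of the ideal generated by the minimal generators of $I$ not divisible by $x$. The ``in particular'' step, via $(I(G),v)=(I(G-v),v)$ and the invariance of regularity under passing to the smaller polynomial ring, is also correct. An alternative, marginally shorter route to the general case is to skip Hochster's formula entirely and note that $(I,x)$ is the image of $I$ under setting $x=0$, then bound the Betti numbers of the restriction directly by the upper-semicontinuity inherent in the Taylor/lcm-lattice description, but your version is the cleaner and more standard one.
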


 The following theorem follows from Lemma $2.10$ of [3]:

\begin{lemma}\label{exact}
Let $I \subseteq S$ be a monomial ideal, and let $m$ be a monomial of degree $d$.  Then
\[
\reg(I) \leq \max\{ \reg (I : m) + d, \reg (I,m)\}. 
\]
Moreover, if $m$ is a variable $x$ appearing in I, then $\reg(I)$ is {\it equal} to one of
these terms.
\end{lemma}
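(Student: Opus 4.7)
The plan is to derive the first inequality from the standard short exact sequence, then bootstrap to the moreover part via a second, complementary short exact sequence.

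For the first part, I would introduce the short exact sequence of graded $S$-modules
\[
0 \to (S/(I:m))(-d) \xrightarrow{\cdot m} S/I \to S/(I,m) \to 0,
\]
where the left map sends $[s] \mapsto [sm]$ (well-defined and injective because $sm \in I$ iff $s \in I:m$) and the right map is the canonical surjection. Feeding this into the long exact sequence of $\Tor^S_*(-, K)$, any nonzero class in $\Tor_i^S(S/I, K)_j$ either survives to a nonzero class in $\Tor_i^S(S/(I,m), K)_j$, forcing $\reg(S/(I,m)) \geq j-i$, or is the image of a class in $\Tor_i^S((S/(I:m))(-d), K)_j = \Tor_i^S(S/(I:m), K)_{j-d}$, forcing $\reg(S/(I:m)) + d \geq j-i$. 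Taking the supremum over bidegrees $(i,j)$ yields
\[
\reg(S/I) \leq \max\{\reg(S/(I:m)) + d,\; \reg(S/(I,m))\},
\]
and adding $1$ throughout gives the stated bound for $\reg(I)$.

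For the moreover part, suppose $m = x$ is a variable appearing in $I$. By Lemma 2.9 we have $\reg(I, x) \leq \reg(I)$. If equality holds, we are done with $\reg(I) = \reg(I, x)$. Otherwise $\reg(I, x) < \reg(I)$, and I would use the complementary short exact sequence
\[
0 \to I \to (I, x) \to (S/(I:x))(-1) \to 0,
\]
in which the right-hand identification $(I,x)/I \cong (S/(I:x))(-1)$ comes from the map $[s] \mapsto [sx]$ combined with the isomorphism $(x) \cong S(-1)$. The standard regularity inequality for this sequence gives
\[
\reg((S/(I:x))(-1)) \leq \max\{\reg(I) - 1,\; \reg(I, x)\} = \reg(I) - 1,
\]
i.e., $\reg(I:x) + 1 \leq \reg(I)$. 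Combining with the first-part bound $\reg(I) \leq \max\{\reg(I:x) + 1, \reg(I, x)\}$, whose right-hand max must be realized by $\reg(I:x) + 1$ since $\reg(I, x) < \reg(I)$, yields $\reg(I) = \reg(I:x) + 1$.

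The main obstacle is less in the individual short-exact-sequence manipulations (which are routine once one pins down the degree twists) than in recognizing the need for the \emph{second} short exact sequence to obtain the reverse inequality in the moreover claim; the first sequence alone only delivers a one-sided bound. Degenerate cases such as $I : x = S$ (when $x$ is itself a generator of $I$) collapse automatically into the alternative $\reg(I) = \reg(I, x)$, so they need no separate treatment.
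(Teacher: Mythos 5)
Your proof is correct. The paper itself gives no proof of this lemma, deferring to Lemma 2.10 of [3]; your argument --- the colon short exact sequence $0 \to (S/(I:m))(-d) \to S/I \to S/(I,m) \to 0$ for the inequality, plus the complementary sequence $0 \to I \to (I,x) \to (S/(I:x))(-1) \to 0$ combined with $\reg(I,x) \leq \reg(I)$ for the ``moreover'' part --- is precisely the standard argument used there, with the degree twists and degenerate cases handled correctly.
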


 Using this lemma repeatedly we get the following result, which is also a varsion of the Lemma 5.1 of [3]:\\
 
\begin{lemma}
Let $J \subseteq I$ be two monomial ideals in the polynomial ring $S$ and $I$ is generated in degree $d$ by $m_1,...,m_k$. Then $$ \text{reg} (J) \leq \text{max} \{ \text{reg} ( (J:m_i)+(m_1:m_i)+...+(m_{i-1}):(m_i))+d, 1\leq i \leq k, \text{reg}(I) \}$$
\end{lemma}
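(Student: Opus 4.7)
The plan is to iterate Lemma~2.10 (the stated \texttt{exact} lemma) $k$ times, applied successively to the monomials $m_1, m_2, \ldots, m_k$.

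First I would apply Lemma~2.10 to $J$ with $m = m_1$, obtaining
\[
\reg(J) \leq \max\{\reg(J : m_1) + d,\ \reg(J, m_1)\}.
\]
Next, since $(J, m_1)$ is itself a monomial ideal and $m_2$ is a monomial of degree $d$, I would apply Lemma~2.10 to $(J, m_1)$ with $m = m_2$:
\[
\reg(J, m_1) \leq \max\{\reg((J, m_1) : m_2) + d,\ \reg(J, m_1, m_2)\}.
\]
Using the standard identity $(A + B) : m = (A : m) + (B : m)$ for monomial ideals, the colon here unfolds as $(J : m_2) + ((m_1) : m_2)$. Continuing this process, at the $i$-th step I apply Lemma~2.10 to $(J, m_1, \ldots, m_{i-1})$ with $m = m_i$, producing a contribution of the form $\reg((J : m_i) + (m_1 : m_i) + \cdots + (m_{i-1} : m_i)) + d$ together with a residual $\reg(J, m_1, \ldots, m_i)$ to be processed further.

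After $k$ iterations the residual term becomes $\reg(J, m_1, \ldots, m_k)$. Because $J \subseteq I = (m_1, \ldots, m_k)$, we have $(J, m_1, \ldots, m_k) = I$, so this residual is simply $\reg(I)$. Collecting all the intermediate maxima then yields the stated inequality.

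The routine bookkeeping is the only real work; the main thing to verify is the colon identity used to expand $(J, m_1, \ldots, m_{i-1}) : m_i$ into the sum $(J : m_i) + (m_1 : m_i) + \cdots + (m_{i-1} : m_i)$, which holds in general for sums of monomial ideals. There is no serious obstacle, since each step is a direct invocation of Lemma~2.10 and the max over $k$ steps absorbs naturally into the stated max over $i$.
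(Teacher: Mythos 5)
Your proposal is correct and takes essentially the same route as the paper, which derives this lemma precisely by applying Lemma \ref{exact} repeatedly to $J, (J,m_1), (J,m_1,m_2), \ldots$ and terminating at $(J,m_1,\ldots,m_k)=I$. The one detail worth making explicit, as you do, is the monomial-ideal identity $(A+B):m=(A:m)+(B:m)$, which is what turns $(J,m_1,\ldots,m_{i-1}):m_i$ into the sum appearing in the statement.
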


 Finally the following theorem due to Fr\"oberg (See Theorem $1$ of [8]):

\begin{theorem}
The minimal free resolution of $I(G)$ is linear if and only if the complement graph $G^c$ is chordal.
\end{theorem}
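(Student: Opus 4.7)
The plan is to translate the statement into a condition on simplicial homology via Hochster's formula, and then to dispose of each direction separately: an inductive topological argument using the existence of a simplicial vertex for the harder direction, and an cycle-based obstruction for the easier one. To begin, I would recognize $I(G)$ as the Stanley--Reisner ideal of the flag (clique) complex $\Delta := \Delta(G^c)$ of the complement graph, since the minimal non-faces of size $2$ of $\Delta$ are precisely the edges of $G$. Hochster's formula then yields
\[
\beta_{i,\,i+k}(I(G)) \;=\; \sum_{\substack{W \subseteq V(G)\\ |W| = i+k}} \dim_K \tilde H_{k-2}(\Delta_W; K),
\]
where $\Delta_W$ is the induced subcomplex on $W$, which coincides with the flag complex of $(G^c)_W$. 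Since $I(G)$ is generated in degree $2$, the resolution is linear if and only if $\beta_{i,\,i+k}(I(G)) = 0$ for every $k \neq 2$, equivalently $\tilde H_\ell(\Delta_W; K) = 0$ for every $W \subseteq V(G)$ and every $\ell \geq 1$. Because chordality is preserved under taking induced subgraphs, the theorem reduces to the following claim: a graph $H$ is chordal if and only if the flag complex of every induced subgraph of $H$ has vanishing reduced homology in all positive degrees.

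For the direction ``$G^c$ chordal $\Rightarrow$ linear resolution'', I would induct on $|V(H)|$. By Dirac's theorem, every chordal graph $H$ has a simplicial vertex $v$, i.e., one whose neighborhood $N_H(v)$ is a clique in $H$. When $H$ is connected, write $\Delta(H) = \Delta(H-v) \cup C$, where $C$ is the simplex on $\{v\} \cup N_H(v)$ and the intersection $\Delta(H-v) \cap C$ is the simplex on $N_H(v)$---here the flag condition is essential, because it guarantees every subset of $N_H(v)$ spans a face. Both $C$ and the intersection are contractible, so the Mayer--Vietoris sequence delivers $\tilde H_\ell(\Delta(H); K) \cong \tilde H_\ell(\Delta(H-v); K)$ for every $\ell \geq 1$, and the inductive hypothesis applied to the chordal graph $H - v$ supplies vanishing. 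If $H$ is disconnected (or $v$ is isolated), $\Delta(H)$ is a disjoint union of the flag complexes of its components, which leaves $\tilde H_\ell$ for $\ell \geq 1$ unchanged. Since every induced subgraph of a chordal graph is chordal, the same vanishing holds for every $\Delta_W$.

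For the direction ``linear resolution $\Rightarrow$ $G^c$ chordal'', I would argue contrapositively: if $G^c$ contains an induced cycle of length $n \geq 4$ on a vertex set $W$, then $(G^c)_W$ is triangle-free, so $\Delta_W$ is the $1$-dimensional geometric cycle $C_n$, which is homotopy equivalent to $S^1$ and therefore satisfies $\tilde H_1(\Delta_W; K) \neq 0$. Through Hochster's formula this produces a nonzero Betti number $\beta_{n-3,\,n}(I(G))$, off the linear strand since $n - (n-3) = 3 \neq 2$, so $I(G)$ does not have a linear minimal free resolution.

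The principal obstacle is the topological step in the chordal direction: one must confirm both that the flag condition makes the link of a simplicial vertex equal to the full simplex on its neighborhood (so that Mayer--Vietoris collapses cleanly) and that the induction propagates uniformly through disconnected graphs and all induced subgraphs. Once this topological collapse is justified, the remainder is bookkeeping with Hochster's formula and the easily checked observation that the cycle $C_n$ itself provides the non-linear Betti number in the converse.
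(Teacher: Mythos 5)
The paper does not prove this statement at all: it is quoted as Fr\"oberg's theorem (Theorem 1 of [8]) and used as a black box, so there is no internal proof to compare against. Judged on its own, your argument is correct and complete, and it is essentially the standard modern proof of Fr\"oberg's theorem: identify $I(G)$ with the Stanley--Reisner ideal of the clique complex $\Delta(G^c)$, apply Hochster's formula to reduce linearity to the vanishing of $\tilde H_\ell(\Delta_W;K)$ for $\ell\geq 1$ over all induced subcomplexes, kill the homology inductively by peeling off a simplicial vertex (Dirac) with Mayer--Vietoris, and exhibit an induced $C_n$ ($n\geq 4$) in $G^c$ as a topological circle contributing $\beta_{n-3,n}(I(G))\neq 0$ in the converse. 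All the delicate points are handled: the flag condition does make the closed star of a simplicial vertex a full simplex and the intersection with $\Delta(H-v)$ the simplex on $N_H(v)$; the union of the two pieces really is all of $\Delta(H)$; the isolated-vertex/disconnected case is disposed of separately; and $\tilde H_{-1}$ causes no trouble since every vertex is a face. Two cosmetic remarks: you should state the (trivial) base case of the induction, and in the reduction it is worth saying explicitly that $\beta_{i,j}(I)$ vanishes automatically for $j<i+2$ so that only the strands $k>2$, i.e.\ $\ell=k-2\geq 1$, are at issue. Neither affects correctness. What your proof buys over the paper's citation is a self-contained, field-independent argument; what the citation buys is brevity, since the result is classical.
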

 
  We finally recall a theorem on the regularity of cricket free graphs which is the Theorem 3.4 of [3]:\\
  
\begin{theorem}
If $G$ is gap free and cricket free then $\text{reg }(I(G)) \leq 3$.
\end{theorem}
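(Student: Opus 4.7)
The plan is to induct on $|V(G)|$, with trivial base cases when $G$ has no edges. For the inductive step, pick any non-isolated vertex $x \in V(G)$ and apply Lemma~\ref{exact} with $m = x$:
\[
\reg(I(G)) \leq \max\{\reg(I(G):x) + 1,\ \reg((I(G), x))\}.
\]
Since any induced subgraph of $G$ inherits both the gap-free and cricket-free properties, the graph $G-x$ is again gap-free and cricket-free, and $\reg((I(G), x)) = \reg(I(G-x)) \leq 3$ by the inductive hypothesis. It remains to show $\reg(I(G):x) \leq 2$.

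A direct computation, using that any edge of $G$ incident to a vertex of $N_G(x)$ is divisible by a degree-one generator coming from an edge at $x$, yields
\[
I(G):x \;=\; (N_G(x)) + I(H), \qquad H := G - N_G[x].
\]
Since the variables in $N_G(x)$ do not appear in $I(H)$, the isomorphism $S/(I(G):x) \cong K[x, V(H)]/I(H)$ gives $\reg(I(G):x) = \reg(I(H))$ when $I(H)\neq 0$ and $\reg(I(G):x) = 1$ otherwise. Thus the task reduces to showing that $I(H)$ is either zero or has a $2$-linear minimal free resolution. By Fr\"oberg's theorem (the Fr\"oberg result quoted above), this amounts to showing that $H^c$ is chordal.

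The main obstacle, and the only point at which both hypotheses are used in a nontrivial way, is ruling out induced $k$-cycles in $H^c$ for all $k \geq 4$. Suppose $v_1, \dots, v_k \in V(H)$ induce a $C_k$ in $H^c$, so they induce the anticycle $\overline{C_k}$ in $H \subseteq G$. For $k = 4$ the anticycle is a pair of disjoint edges with no connecting edges, which is a gap in $G$: contradiction. For $k \geq 5$, fix any $u \in N_G(x)$; applying gap-freeness to the disjoint pair $\{xu,\ v_iv_j\}$ for each edge $v_iv_j$ of $\overline{C_k}$ shows that $N_G(u)\cap\{v_1,\dots,v_k\}$ is a vertex cover of $\overline{C_k}$. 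A case analysis on such vertex covers produces an edge $v_iv_j$ of $\overline{C_k}$ and a further vertex $v_\ell$ with $u \sim v_i, v_j, v_\ell$ but $v_\ell\not\sim v_i, v_j$; combined with $x \sim u$ and $x \not\sim v_i, v_j, v_\ell$, the induced subgraph on $\{x, u, v_i, v_j, v_\ell\}$ is then a cricket with center $u$, triangle $\{u, v_i, v_j\}$, and pendants $x, v_\ell$, contradicting cricket-freeness. Carrying out this last case analysis uniformly for all $k \geq 5$ (and checking that any sufficiently large neighborhood pattern of $u$ among the cycle vertices always yields such an $\{i,j,\ell\}$) is the delicate combinatorial core of the proof.
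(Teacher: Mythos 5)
This statement is not proved in the paper at all: it is recalled verbatim as Theorem 3.4 of reference [3], so there is no internal proof to compare against. Judged on its own, your argument is correct and complete in all but one deferred step, and that step is easier than you suggest. The reduction $I(G):x=(N_G(x))+I(H)$ with $H=G-N_G[x]$, the passage to $\reg(I(H))$ via disjointness of variables, and the induction via Lemma~\ref{exact} are all sound, as is the $k=4$ case (an induced $C_4$ in $H^c$ is an induced pair of disjoint edges in $G$, i.e.\ a gap). For the ``delicate combinatorial core'' with $k\ge 5$: your vertex-cover observation already finishes it, since the non-neighbors of $u$ among $\{v_1,\dots,v_k\}$ form an independent set of $\overline{C_k}$, hence a clique of $C_k$, hence at most two \emph{consecutive} cycle vertices; therefore $u$ has three consecutive neighbors $v_m,v_{m+1},v_{m+2}$ on the cycle, and since $v_mv_{m+2}$ is an edge of $H$ while $v_{m+1}$ is adjacent to neither $v_m$ nor $v_{m+2}$ in $H$, the set $\{x,u,v_m,v_{m+2},v_{m+1}\}$ is exactly the induced cricket you describe. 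No further case analysis is needed, and no uniformity issue in $k$ arises. A point worth noting: your argument works for an \emph{arbitrary} non-isolated vertex $x$, whereas the proof in [3] routes through a vertex of maximum degree; your version is therefore slightly cleaner. The only cosmetic blemishes are the garbled justification of the colon formula (the correct statement is simply that $xm\in I(G)$ forces either an edge at $x$ to contribute a neighbor dividing $m$, or an edge of $G-N_G(x)$ to divide $m$) and the unstated convention for $\reg$ when $I(G)$ or $I(H)$ is zero, both trivially repaired.
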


 The following is the Theorem 2.1 of [3]:\\
\begin{theorem}
Let $G$ be a simple graph with edge ideal
$I(G)$. Then $I(G)$ has a $p$-linear resolution if and only if every induced
cycle in $G^c$ that is not a triangle has length $\geq p + 3$.
\end{theorem}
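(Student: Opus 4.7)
The strategy is to translate $p$-linearity of $I(G)$ into a topological statement via Hochster's formula, and then analyze both directions in terms of the clique complex of the complement graph. Recall that $I(G)$ is the Stanley--Reisner ideal of the independence complex $\mathrm{Ind}(G)$, and that independent sets of $G$ are precisely the cliques of $G^c$, giving $\mathrm{Ind}(G) = \mathrm{Cl}(G^c)$. Hochster's formula yields
$$\beta_{i,j}(I(G)) = \sum_{\substack{W \subseteq V(G) \\ |W| = j}} \dim_K \tilde{H}_{j-i-2}(\mathrm{Cl}(G^c[W]); K),$$
so $p$-linearity of $I(G)$ becomes a vanishing statement on the reduced homology of the clique complexes $\mathrm{Cl}(G^c[W])$ of induced subgraphs of $G^c$.

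For the forward direction I argue contrapositively. Suppose $G^c$ contains an induced cycle $C_k$ with $4 \leq k \leq p+2$, and take $W = V(C_k)$. Since $k \geq 4$, the cycle has no triangle, so $\mathrm{Cl}(G^c[W]) = C_k$ is a $1$-dimensional simplicial complex with $\tilde{H}_1(C_k; K) = K$. This produces a nonzero contribution to $\beta_{k-3,\,k}(I(G))$; since $k - 3 \leq p - 1$ and $k \neq (k-3) + 2$, the resolution of $I(G)$ fails to be $p$-linear. The multigraded refinement of Hochster's formula ensures that this obstruction, detected inside $G[W]$, is genuinely inherited by $I(G)$.

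For the reverse direction --- which is the main obstacle --- assume every non-triangle induced cycle of $G^c$ has length at least $p + 3$. Since the hypothesis passes to induced subgraphs, it suffices to establish the purely topological statement: for any graph $H$ whose non-triangle induced cycles all have length $\geq p+3$, the reduced homology $\tilde{H}_\ell(\mathrm{Cl}(H); K)$ vanishes in the appropriate range. The plan is induction on $|V(H)|$ via Mayer--Vietoris: for a vertex $v \in V(H)$ decompose
$$\mathrm{Cl}(H) = \mathrm{Cl}(H - v) \cup \bigl(v \ast \mathrm{Cl}(N(v))\bigr),$$
where $N(v)$ is the neighborhood of $v$ in $H$ and the star $v \ast \mathrm{Cl}(N(v))$ is contractible; the intersection of the two pieces is $\mathrm{Cl}(N(v))$. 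The long exact sequence then reduces the required vanishing for $\mathrm{Cl}(H)$ to the same vanishing for $\mathrm{Cl}(H - v)$ and $\mathrm{Cl}(N(v))$, both handled by the inductive hypothesis. This mirrors the classical proof of Fr\"oberg's theorem, in which chordal graphs have contractible clique complexes via iterative removal of simplicial vertices. The subtle step is verifying that the cycle-length hypothesis propagates with enough slack to $N(v)$ to drive the induction, and this is precisely where the numerical bound $p + 3$ is used decisively: a short induced cycle emerging inside $N(v)$ would correspond, together with $v$, to a short induced cycle in $H$ itself, contradicting the hypothesis.
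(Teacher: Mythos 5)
The paper itself gives no proof of this statement: it is recalled as a known result (quoted from Theorem 2.1 of [3], which in turn comes from Eisenbud--Green--Hulek--Popescu [7]), so your argument has to stand on its own. Your reduction via Hochster's formula and the identification of the independence complex of $G$ with the clique complex of $G^c$ is the standard route, and your forward direction is complete and correct: an induced $C_k$ in $G^c$ with $4 \le k \le p+2$ has clique complex equal to the $1$-dimensional cycle, so $\tilde{H}_1 \ne 0$ and the multigraded form of Hochster's formula forces $\beta_{k-3,k}(I(G)) \ne 0$ with $k-3 \le p-1$ and $k \ne (k-3)+2$.

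The reverse direction, however, has a genuine gap. The inductive statement is never pinned down: unwinding Hochster's formula, what you must show is that for every induced subgraph $H = G^c[W]$ on $n$ vertices one has $\tilde{H}_\ell(\mathrm{Cl}(H)) = 0$ for all $\ell \ge \max\{1,\, n-p-1\}$. With that formulation the Mayer--Vietoris step does go through for $\ell \ge 2$: the term $\tilde{H}_\ell(\mathrm{Cl}(H-v))$ vanishes by induction because $H-v$ has $n-1$ vertices, and the term $\tilde{H}_{\ell-1}(\mathrm{Cl}(H[N(v)]))$ vanishes because $|N(v)| \le n-1$ gives $\ell - 1 \ge |N(v)| - p - 1$. (Note the hypothesis passes to $H[N(v)]$ simply because an induced cycle of an induced subgraph is an induced cycle of $H$; your remark that one must adjoin $v$ to obtain a short cycle in $H$ is off the mark, since adjoining $v$ to a cycle in $N(v)$ produces a wheel, not a cycle.) The case you do not address, and where the naive induction breaks, is $\ell = 1$: there the connecting homomorphism lands in $\tilde{H}_0(\mathrm{Cl}(H[N(v)]))$, which is not covered by the inductive hypothesis and does not vanish in general --- for any vertex $v$ on an induced cycle of length $\ge 4$ the neighbourhood $N(v)$ is disconnected, and this is exactly how the nonzero $\tilde{H}_1$ of a long induced cycle survives. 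The repair is to observe that $\ell = 1$ is only required when $n \le p+2$, in which case the hypothesis forces $H$ to be chordal (it cannot contain an induced cycle of length $\ge p+3 > n$), and then to prove separately that clique complexes of chordal graphs have no reduced homology in positive degrees via the simplicial-vertex argument you allude to. Without this boundary case the induction has no anchor, so as written the harder implication is incomplete.
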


\bigskip

\section{Regularity of path ideals of gap free graphs}

 In this section we study the regularity of the path ideals and find several upper bounds for them. All along we assume that $G$ is a gap free graph whose $t$-path ideal is denoted by $I_t$ for all $t\geq 3$ and whose edge ideal is denoted by $I$.
 
\begin{lemma}
 If $e=uv$ is a generator of $I$ and $I_3 \neq 0$  then $(I_3:e)$ is generated in degree one. As a consequence it is a prime ideal generated by variables.   
\end{lemma}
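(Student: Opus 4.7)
The plan is to enumerate the minimal generators of $(I_3:e)$ explicitly and show that each is divisible by some variable already lying in the colon ideal. Because $(I_3:uv)$ is a monomial ideal, it is generated by the squarefree monomials $m/\gcd(m,uv)$ as $m=xyz$ ranges over the $3$-path generators of $I_3$ (each corresponding to a path with vertex set $\{x,y,z\}$ and edges $xy,yz$ in $G$). I would organize the argument by the size of $\{x,y,z\}\cap\{u,v\}$.

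When this intersection has two elements, either $uv\in\{xy,yz\}$ is a path-edge or $uv=xz$ is a diagonal edge (possible if $\{x,y,z\}$ spans a triangle); in each subcase the quotient $m/\gcd(m,uv)$ is already a single variable. When the intersection has exactly one element, say $u\in\{x,y,z\}$ and $v\notin\{x,y,z\}$, the quotient has degree two; but one of the path-edges $xy$ or $yz$ shares the vertex $u$ with $uv$, so the three distinct vertices $u,v,w$ (where $w$ is a neighbor of $u$ on the original path) form a new $3$-path. Thus $w$ is a variable of $(I_3:uv)$ that divides the degree-two quotient.

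The only delicate case is the empty intersection $\{x,y,z\}\cap\{u,v\}=\emptyset$, where the raw quotient is $xyz$ itself. Here I would invoke gap-freeness: since $xy$ and $uv$ are disjoint edges of $G$, there must exist an edge of $G$ between $\{x,y\}$ and $\{u,v\}$. Whichever of $xu,xv,yu,yv$ exists, it together with $uv$ forms a $3$-path on $\{u,v,x\}$ or $\{u,v,y\}$, placing $x$ or $y$ into $(I_3:uv)$; that variable then divides $xyz$. This is the only use of the gap-free hypothesis, and it is the step I expect to require the most care.

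For the consequence, once $(I_3:e)$ is generated in degree one, its minimal generators are squarefree of degree one and hence are variables, so $(I_3:e)=(x_{i_1},\dots,x_{i_k})$ for some subset of the variables of $S$; the quotient of $S$ by such an ideal is a polynomial ring in the remaining variables, hence a domain, so $(I_3:e)$ is prime.
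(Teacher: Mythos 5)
Your proposal is correct and follows essentially the same route as the paper: both arguments reduce to a case analysis on the size of $\{x,y,z\}\cap\{u,v\}$ for a witnessing $3$-path, invoke gap-freeness only in the disjoint case to produce an edge between $\{x,y\}$ and $\{u,v\}$, and in every case exhibit a variable of $(I_3:e)$ dividing the generator in question. The only cosmetic difference is that you work with the standard generators $m/\gcd(m,uv)$ of the colon ideal while the paper starts from an arbitrary minimal generator and forces it to equal a variable by minimality.
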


\begin{proof}
 Let $m$ be a minimal monomial generator of $(I_3:e)$. So there exists $a,b,c \in V(G)$ with $ab, bc \in E(G)$ such that $abc |  uvm$. If $\{a,b,c\} \cap \{u,v\} = \emptyset $ then $abc |m$. As $G$ is gap free one of $ua,va,ub,vb$ is an edge in $G$. Let $ua$ is an edge. So $ae \in I_3$. Hence $m=a$ as $m$ is minimal. Similarly we show that in the other cases also $m$ has degree one. \\
 
  If $u=b$ and $v\neq a,c$ then $a|m$ and $ae \in I_3$ making $a=m$ by minimality of $m$. If $u=b$ and $v=c$ again $a|m, ae \in I_3$ and $m=a$. If $u=a, v\neq b$ then $be \in I_3$ and $b|m$. Hence by similar argument $m=b$. If $u=a, v=b$ then $ce \in I_3$ and $c|m$. Again by similar argument $m=c$. The other cases follow by symmetry.\\
  
  This completes the proof.\\
\end{proof}

 Next we prove a bound for the regularity of $I_3$ in terms of the regularity of $I$.
 
\begin{theorem}
If $I_3 \neq 0$ and $\text{reg} (I)=r$ then $\text{reg} (I_3) \leq \text{max} \{r,3\}$.
\end{theorem}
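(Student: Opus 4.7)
The plan is to invoke the ``iterated colon'' lemma (the one right after Lemma~\ref{exact}) with $J = I_3$ and $I = I(G)$. Note $I_3 \subseteq I$, because every generator $abc$ of $I_3$ corresponds to a 3-path with edges $ab$ and $bc$, so $abc = c \cdot ab \in I$. Write $I = (e_1,\ldots,e_k)$ where the $e_i$ are the edges of $G$. The lemma gives
\[
\reg(I_3) \;\leq\; \max\Bigl\{\,\reg\!\left((I_3:e_i)+\sum_{j<i}(e_j:e_i)\right)+2,\ \reg(I)\,\Bigr\}.
\]
So it suffices to show that for every $i$, the ideal $K_i := (I_3:e_i)+\sum_{j<i}(e_j:e_i)$ is generated by variables; for then $\reg(K_i)=1$ and the bound becomes $\max\{3,r\}$.

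I would first handle $(I_3:e_i)$: this is generated by variables by the preceding lemma. Next, for each $j<i$, the colon $(e_j:e_i)$ is either principal generated by a variable (when $e_j$ and $e_i$ share a vertex) or equals the principal ideal $(e_j)$ generated in degree $2$ (when $e_j$ and $e_i$ are vertex-disjoint). The only obstruction to $K_i$ being generated by variables is therefore the disjoint case, and this is exactly where gap-freeness enters.

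The main step is the following absorption argument. Suppose $e_i = uv$ and $e_j = ab$ are disjoint. Since $G$ is gap-free, at least one of the four edges $ua,\,va,\,ub,\,vb$ belongs to $G$. If $ua$ or $va$ is an edge, then $\{u,v,a\}$ spans a 3-path (with middle vertex $u$ or $v$, using the already-present edge $uv$), so $a \in (I_3:uv) = (I_3:e_i)$; symmetrically, if $ub$ or $vb$ is an edge, then $b \in (I_3:e_i)$. In either case the degree-$2$ generator $ab$ of $(e_j:e_i)$ is divisible by a variable already lying in $(I_3:e_i)$, hence is absorbed. Thus every degree-$2$ generator contributed by a disjoint $e_j$ is redundant in $K_i$, and $K_i$ is generated by variables.

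The main obstacle is spotting precisely that one does not need to analyze $K_i$ beyond producing its degree-$1$ generators from $(I_3:e_i)$; the gap-free hypothesis is tailor-made to swallow the degree-$2$ pieces that $(e_j:e_i)$ would otherwise contribute. Once this is in place, the calculation concludes: $K_i$ is generated by variables, hence $\reg(K_i)=1$, hence $\reg(K_i)+2=3$, and combined with the $\reg(I)=r$ term from the iterated colon lemma we obtain $\reg(I_3)\leq \max\{r,3\}$ as desired.
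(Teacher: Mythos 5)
Your proposal is correct and follows essentially the same route as the paper: both apply the iterated colon lemma with $J=I_3$, $I=I(G)$, split $(e_j:e_i)$ into the shared-vertex case (a variable) and the disjoint case (where gap-freeness produces a variable of $(I_3:e_i)$ dividing the degree-two generator), and conclude each colon ideal is generated by variables. Your write-up is in fact somewhat more careful than the paper's, which states the absorption step only up to symmetry and cites the auxiliary lemmas a bit loosely.
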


\begin{proof}
 Notice that for any two different edges $e=ab,f=cd$ with no common vertices, $(e:f)=(e)$. As $G$ is gap free at least one of the vertices of $e$ forms an edge with a vertex of $f$. Without loss of generality we can assume $ac$ is an edge. However we observe that in this case $(a) \subseteq (I_3:f)$.\\
 
 In case $e$ and $f$ has a common vertex, $(e:f)$ is generated by a variable. So it follows from the previous lemma that for different edges $e_1,...,e_k$, $(I_3, e_1,....,e_{k-1}):(e_k)$ is $J$ where $J$ is an ideal generated by some variables.\\
 
 In light of these and the Lemma 2.8 we observe that $\text{reg}((I_3, e_1,....,e_{k-1}):(e_k)) \leq 3$. The result then follows from the Theorem 2.11.
\end{proof}

 The next result bounds $\text{reg} (I_4)$ in terms of $\text{reg} (I)$ in two different cases.
 
\begin{lemma}
 Let $I_4 \neq 0$. For any edge $e=xy$, $(I_4:e)$ is a squarefree quadratic monomial whose minimal monomial generaors are either edges of $G$ which does not share a common vertex with $e$, or square free quadratic monomials $uv$ such that $ua$ and $vb$ are edges in $G$.
\end{lemma}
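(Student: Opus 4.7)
The plan is to let $m$ be a minimal monomial generator of $(I_4:e)$ and track how the defining $4$-path interacts with $e$. By definition there is a $4$-path $p_1p_2p_3p_4$ of $G$ whose associated squarefree monomial divides $xy\cdot m$, and I would split cases on $k:=|\{p_1,p_2,p_3,p_4\}\cap\{x,y\}|\in\{0,1,2\}$. The single use of the gap-free hypothesis is the preliminary observation that any edge $uv$ of $G$ with $\{u,v\}\cap\{x,y\}=\emptyset$ already lies in $(I_4:e)$: gap-freeness supplies an edge from $\{x,y\}$ to $\{u,v\}$, say $xu$, so that $y$-$x$-$u$-$v$ is a $4$-path whose monomial, divided by $xy$, is exactly $uv$. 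In particular every Type~1 monomial listed in the lemma belongs to $(I_4:e)$.

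Next I would rule out $k\leq 1$. If $k=0$, the entire monomial $p_1p_2p_3p_4$ divides $m$, so the edge $p_1p_2$ (whose vertices both miss $\{x,y\}$) divides $m$ and by the observation lies in $(I_4:e)$, contradicting minimality since $\deg m\geq 4$. If $k=1$, with $p_i$ the unique vertex of the path in $\{x,y\}$, then $p_1p_2p_3p_4/p_i$ divides $m$; among the three path edges at least one avoids $p_i$ entirely and hence has both endpoints outside $\{x,y\}$. By the observation that edge again lies in $(I_4:e)$ and divides $m$ of degree $\geq 3$, contradicting minimality. Therefore $\{x,y\}\subseteq\{p_1,p_2,p_3,p_4\}$, and the quadratic $uv:=p_1p_2p_3p_4/(xy)$ lies in $(I_4:e)$ and divides $m$, forcing $m=uv$.

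The last step is to classify $uv$ according to the positions of $\{x,y\}$ inside the ordered path. If $\{x,y\}$ occupies $\{p_1,p_2\}$, $\{p_3,p_4\}$, or $\{p_1,p_4\}$, then $u,v$ occupy a consecutive pair of positions, so $uv$ is an edge of $G$ and disjoint from $e$ -- the Type~1 case. In each of the three remaining patterns $\{p_2,p_3\}$, $\{p_1,p_3\}$, and $\{p_2,p_4\}$, writing out the path (e.g.\ $u$-$x$-$y$-$v$, $x$-$u$-$y$-$v$, or $u$-$x$-$v$-$y$, up to swapping $x\leftrightarrow y$ and $u\leftrightarrow v$) shows that $u$ is adjacent in $G$ to one endpoint of $e$ while $v$ is adjacent to the other -- the Type~2 case.

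The main obstacle is really just the bookkeeping in this final classification: six position patterns must each be inspected to confirm that the residual $uv$ falls into exactly one of the two advertised families. Everything else is a clean reduction driven by the single gap-free observation, so I expect no further hidden difficulties.
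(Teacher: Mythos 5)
Your argument is correct and follows essentially the same route as the paper's: gap-freeness puts every edge disjoint from $e$ into $(I_4:e)$, and analyzing the $4$-path witnessing a minimal generator forces that generator to be quadratic of one of the two stated types (with $ua$, $vb$ read as $ux$, $vy$). Your case split on $|\{p_1,\dots,p_4\}\cap\{x,y\}|$ and the six position patterns simply makes explicit the details the paper's terser proof leaves to the reader.
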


\begin{proof}
 Clearly any minimal generator has to have degree at least two. Any edge that has no vertex in common with $e$ is a generator of $(I_4:e)$ by the fact that $G$ is gap free. If $m$ is a minimal monomial generator of $(I_4:e)$ then $m$ is not divisible by $x$ or $y$. If $m$ is not divisible by an edge that is that does not have a common vertex with $e$ then $m$ must be divisible by $uv$ for some $u$, which is a neighbor of $x$ and for some $v$, which is a neighbor of $y$, with $u \neq v$. The first part of the theorem follows due to the fact that any such $uv$ is a generator of $(I_4:e)$.\\\\
\end{proof}

   As $(I_4:e)$ is a square free quadratic monomial ideal, it is an edge ideal and we denote the corresponding graph by $G'$. Next two lemmas show that the induced cycles of length greater than or equal to four of $G'^c$ are also induced cycles of $G^c$.\\
 
\begin{lemma} If $G'$ is the graph associated to $(I_4:e)$ then $G'$ is gap free
\end{lemma}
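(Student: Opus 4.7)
My plan is to check the definition of gap-freeness directly. Let $e_1 = ab$ and $e_2 = cd$ be two vertex-disjoint edges of $G'$; I would exhibit an edge of $G'$ with one endpoint in $\{a,b\}$ and the other in $\{c,d\}$. The two inputs are (i) the classification of the edges of $G'$ supplied by the previous lemma --- each is either a \emph{Type 1} edge, namely an edge $uv$ of $G$ disjoint from $e$, or a \emph{Type 2} edge, namely a pair $\{u,v\} \subseteq V(G) \setminus \{x, y\}$ with $u \neq v$ and with (after ordering) $u \in N_G(x)$, $v \in N_G(y)$ --- and (ii) the gap-freeness of $G$ itself. In particular every vertex of $G'$ lies in $V(G) \setminus \{x, y\}$.

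The argument then splits into three cases according to the types of $e_1$ and $e_2$. If both are Type 1, then $ab$ and $cd$ are vertex-disjoint edges of $G$, and gap-freeness of $G$ yields a connecting edge $f \in E(G)$; since the endpoints of $f$ lie in $V(G) \setminus \{x,y\}$, $f$ is automatically a Type 1 edge of $G'$. If $e_1$ is Type 1 and $e_2$ is Type 2 with (after relabeling $c, d$) $cx, dy \in E(G)$, then $ab$ and $cx$ are vertex-disjoint edges of $G$; the connecting edge produced by gap-freeness is either $ac$ or $bc$, each being a Type 1 edge of $G'$, or else is $ax$ or $bx$, in which case (say $ax \in E(G)$) the pair $\{a, d\}$ is a Type 2 edge of $G'$ because $ax, dy \in E(G)$ and $a \neq d$. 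If both edges are Type 2, say $ax, by \in E(G)$, then $cd$ being Type 2 gives either $cx, dy \in E(G)$ (so $\{a,d\}$ is a Type 2 edge of $G'$ via $ax, dy$) or $cy, dx \in E(G)$ (so $\{a, c\}$ is a Type 2 edge of $G'$ via $ax, cy$).

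The main bookkeeping obstacle is verifying the vertex-disjointness hypothesis each time we invoke gap-freeness of $G$ --- this uses $\{a,b\} \cap \{c,d\} = \emptyset$ together with $a, b, c, d \notin \{x, y\}$ --- and then checking that the edge it returns is genuinely an edge of $G'$, not merely of $G$. A helpful observation that smooths out the case analysis is that we never apply gap-freeness to a pair of $G$-edges one of which contains $x$ and the other $y$, so the ``trivial'' connecting edge $xy$ never arises, and the connecting edge always yields a usable Type 1 or Type 2 generator of $(I_4 : e)$.
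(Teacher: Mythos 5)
Your proof is correct and follows essentially the same approach as the paper's: a three-way case analysis on the types of the two disjoint edges of $G'$ (edges inherited from $G$ versus new ``Type 2'' pairs), invoking gap-freeness of $G$ to produce a connecting generator of $(I_4:e)$ in each case. The only cosmetic difference is that in the mixed case you apply gap-freeness of $G$ to the pair $ab$, $cx$ rather than to $ab$ and $e=xy$ as the paper does; your version is if anything slightly more careful about the vertex-disjointness bookkeeping.
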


\begin{proof} 
Two edges coming from $G$ can't form a gap in $G'$ as for any two vertex $a$ and $b$ of $G$ which remains a vertex in $G'$, if $ab$ is an edge in $G$ it remains an edge in $G'$ by the construction of $G'$. If $ab$ is an edge in $G$ that remains an edge in $G'$, as $G$ is the graph free either $a$ or $b$ is neighbor of either $x$ or $y$. Hence $ab$ cannot form a gap in $G'$ with any new edge. If $uv$ and $u'v'$ are two new edge in $G'$ with $u,u'$ neighbor of $x$ in $G$ and $v,v'$ neighbor of $y$ in $G$ we observe $uv'$ is an edge in $G'$ and hence we conclude no two new edges can form a gap.\\
\end{proof}
 
\begin{lemma} 
If $G'$ is the graph associated to $(I_4:e)$ then any induced cycle of length greater than or equal to five in $G^c$ is an induced cycle in $G^c$
\end{lemma}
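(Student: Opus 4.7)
My plan is to take an induced cycle $C = v_1 v_2 \cdots v_k$ of length $k \geq 5$ in $G'^c$ and verify directly that $C$ is also an induced cycle in $G^c$: i.e.\ that consecutive vertices of $C$ are non-edges of $G$ and that non-consecutive pairs are edges of $G$. The consecutive-pair direction is immediate from the preceding lemma describing the generators of $(I_4:e)$; the non-consecutive direction is the real content, and I expect to handle it by combining a ``no-mixing'' observation along the cycle with the gap-freeness of $G$.

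First, for consecutive pairs: if $v_i v_{i+1} \in E(G)$, then since the vertices of $G'$ all lie in $V(G)\setminus\{x,y\}$, the edge $v_i v_{i+1}$ shares no vertex with $e=xy$ and hence is an ``old'' generator of $(I_4:e)$, placing $v_i v_{i+1}$ in $E(G')$ and contradicting the fact that $C$ is induced in $G'^c$. So each cycle edge of $C$ is a non-edge of $G$.

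Second, for non-consecutive pairs $v_iv_j$: these are automatically in $E(G')$, so I only need to rule out that some such pair is a ``new'' generator of $(I_4:e)$ without being an edge of $G$. I would suppose for contradiction that such a pair exists; relabel so that $i=1$, $v_1$ is a neighbor of $x$ in $G$, $v_j$ is a neighbor of $y$, and $v_1 v_j \notin E(G)$. The key observation is the \emph{no-mixing rule}: for any two consecutive cycle vertices $v_a, v_{a+1}$, if $v_a$ is a neighbor of $x$ then $v_{a+1}$ is not a neighbor of $y$ (and symmetrically), for otherwise $\{v_a, v_{a+1}\}$ would itself be a new generator of $(I_4:e)$ and so an edge of $G'$, contradicting $v_a v_{a+1} \notin E(G')$. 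Applying this rule step by step along the arc $v_1, v_2, \ldots, v_j$, and doing a short case analysis on whether $v_1$ also lies in $N(y)$, one cannot pass from the $N(x)$-type endpoint to the $N(y)$-type endpoint without meeting an intermediate vertex $v_a$ (with $2 \leq a \leq j-1$) outside $N(x) \cup N(y)$. The symmetric argument on the reverse arc $v_j, v_{j+1}, \ldots, v_k, v_1$ produces another such vertex $v_b$ with $j+1 \leq b \leq k$ and $v_b \notin N(x) \cup N(y)$.

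The indices $a \in \{2, \ldots, j-1\}$ and $b \in \{j+1, \ldots, k\}$ lie on disjoint arcs of $C$, and the hypothesis $k \geq 5$ (together with $j \in \{3, \ldots, k-1\}$ coming from non-adjacency of $v_1$ and $v_j$) ensures that $v_a$ and $v_b$ are non-adjacent on $C$, so $v_a v_b \in E(G')$. Since neither endpoint lies in $N(x) \cup N(y)$, the pair $\{v_a,v_b\}$ is not a new generator of $(I_4:e)$ and must therefore be an old one, giving $v_a v_b \in E(G)$. But then $xy$ and $v_a v_b$ are vertex-disjoint edges of $G$, and the gap-freeness of $G$ forces one of $xv_a, xv_b, yv_a, yv_b$ to be in $E(G)$, which is impossible since $v_a, v_b \notin N(x) \cup N(y)$ --- the desired contradiction. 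The main obstacle I anticipate is making the arc-transition step fully rigorous, namely showing via the no-mixing rule that each arc really must contain an interior vertex outside $N(x) \cup N(y)$; that is the one place where the combinatorial structure of the generators of $(I_4:e)$ interacts with the inductive walk along $C$, and it must be handled carefully across the different ``types'' ($N(x)\setminus N(y)$, $N(y)\setminus N(x)$, or $N(x)\cap N(y)$) a vertex might have.
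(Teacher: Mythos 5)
Your argument is correct, and while it ends in the same place as the paper's proof, it gets there by a noticeably cleaner route. Both proofs share the same endgame: produce two non-consecutive vertices of the anticycle, neither adjacent to $x$ nor to $y$ in $G$; the $G'$-edge between them then cannot be a ``new'' generator of $(I_4:e)$, so it must be an edge of $G$ disjoint from $e=xy$ with no connecting edge, contradicting gap-freeness. The difference lies in how that pair is found. The paper fixes a bad pair $w_1w_{1+j}$ with $j$ minimal, walks backwards from $w_n$ to locate the first vertex $w_{n-l}$ not adjacent to $x$, and then runs a case analysis on the single edge $w_2w_{n-l}$ that leans on the minimality of $j$. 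You instead isolate the ``no-mixing'' rule for consecutive anticycle vertices and propagate it symmetrically along the two arcs between $v_1$ and $v_j$, extracting one vertex outside $N(x)\cup N(y)$ from the interior of each arc; since the arcs are disjoint and $2\le a\le j-1<j+1\le b\le k$, the pair $v_a,v_b$ is automatically non-consecutive. The propagation step you flagged as the potential obstacle does go through by the induction you sketch: if every interior vertex of the arc from $v_1$ to $v_j$ lay in $N(x)\cup N(y)$, no-mixing would force each in turn into $N(x)\setminus N(y)$, and the last one would then violate no-mixing against $v_j\in N(y)$ (the case $v_1\in N(x)\cap N(y)$ only makes things easier, as it forces $v_2\notin N(x)\cup N(y)$ immediately). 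What your version buys is the elimination of the minimality-of-$j$ device and the asymmetric case analysis around $w_2$ and $w_{n-l}$, replaced by a symmetric two-arc argument; as a small bonus, it uses $n\ge 5$ only to ensure both arcs have nonempty interior, so it in fact also covers induced $4$-cycles of $G'^c$.
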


\begin{proof} 
We show that if $w_1...w_n$ is an induced cycle in $G'^c$ with $n\geq 5$ then it is an induced cycle in $G^c$ too. Observe that it is enough to prove that for all $i,j$, $w_i, w_{i+j}$ is not an edge in $E(G')\setminus E(G)$. Suppose on the contrary such $i,j$ exists. Without loss of generality we may choose $j$ to be minimal such that for some $i$, $w_i$ and $w_{i+j}$ are connected in $G'$ but not in $G$. Observe that $j\geq 2$ as $w_i w_{i+1}$ can't be connected in an anticycle. Without loss of generality we may further assume $w_1$ is connected to $x$ and $w_{1+j}$ is connected to $y$ in $G$ and they are not connected to each other in $G$ . Now observe $w_{2+j}$ is not connected to $x$ by an edge in $G$ as that will force $w_{1+j}$ and $w_{2+j}$ to be connected in $G'$  leading to a contradiction. So there exists a smallest $l \geq 0$, $2+j \leq n-l \leq n$ such that $w_{n-l}$ is not connected to $x$ by an edge  in $G$. If $l=0$, then $w_{n}$ is not connected to $x$ by an edge in $G$ and if $l>0$ then $w_{n-l}$ is not connected to $x$ by an edge to $x$  in $G$ and $w_n, w_{n-1},..,w_{n-l+1}$ are connected to $x$ by an edge in $G$\\
 
   Next, we look at the edge $w_2 w_{n-l}$ in $G'$. If $w_2$ is connected to $x$ in $G$ then as $w_{1+j}$ is connected to $y$ that will violate the minimality of $j$. If $w_2$ is connected to $y$ in $G$ then $w_1 w_2$ has to be an edge in $G'$, which will contradict the fact $w_1....w_n$ is an anticycle. We observe $w_{n-l}$ can't be connected to $x$ by selection. If $w_{n-l}$ is connected to $y$ and $l=0$  then  $w_1$ and $w_n$ have to be connected to each other in $G'$. If $w_{n-l}$ is connected to $y$ and $l>0$ then  $w_{n-l+1}$ and $w_{n-l}$ have to be connected to each other in $G'$. Both cases lead to a contradiction as $w_1....w_n$ is an anticycle, so $w_2$ and $w_{n-l}$ are not connected to each other in $G$ and neither of them are connected to $x$ or $y$ (and hence $w_2,w_{n-l},x,y$ are four distinct vertices). As $xy$ is an edge in $G$, $w_2w_{n-l}$ can not be an edge in $G$; otherwise they will form a gap. So $w_2$ is connected to $x$ and this gives a contradiction. Hence $w_1...w_n$ is an induced cycle in $G^c$.\\
\end{proof}

\begin{theorem}
Let $I_4 \neq 0$. If $I$ has a minimal free resolution which is linear up to step $p \geq 2$ then so does $I_4$. In particular if $I$ has linear resolution so does $I_4$.
\end{theorem}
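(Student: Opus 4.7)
The plan is to reduce $p$-step linearity of $I_4$ to $p$-step linearity of certain colon ideals, and then to exploit the three preceding lemmas together with Theorem~2.13 to deduce that linearity.

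Every $4$-path monomial is divisible by one of its edges, so $I_4 \subseteq I$. Fix an ordering $e_1,\dots,e_k$ of the edges of $G$ and set
\[
J_i := (I_4 : e_i) + \sum_{j<i}(e_j : e_i).
\]
Applying Lemma~2.9 with $J = I_4$ and the edge ideal as the ambient ideal of degree-$2$ generators --- and reading the short exact sequence behind Lemma~2.10 one homological degree at a time so as to track $p$-step linearity rather than merely regularity --- it suffices to show that each $J_i$ has a minimal free resolution that is linear in degree~$2$ for $p$ steps.

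To simplify $J_i$, write $e_i = xy$ and take an earlier edge $e_j = cd$. If $e_j$ is vertex-disjoint from $e_i$, gap-freeness supplies a connecting edge (say $xc$), and then $y$--$x$--$c$--$d$ is a $4$-path of $G$; hence $e_i e_j \in I_4$ and $(e_j : e_i)$ is already absorbed into $(I_4 : e_i)$. If $e_j$ shares a vertex with $e_i$, say $c = x$, then $(e_j : e_i) = (d)$, a single variable. Consequently
\[
J_i = (I_4 : e_i) + (v_1,\dots,v_s)
\]
for some variables $v_\ell$, where by Lemma~3.3 the ideal $(I_4 : e_i)$ is the edge ideal $I(G'_i)$ of a graph $G'_i$. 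By Lemma~3.4, $G'_i$ is gap-free, so $(G'_i)^c$ contains no induced $C_4$; by Lemma~3.5, every induced cycle of length $\geq 5$ in $(G'_i)^c$ is already an induced cycle of $G^c$. Since $I$ is $p$-step linear, Theorem~2.13 forces every non-triangle induced cycle of $G^c$ to have length $\geq p+3$; combining, the same is true of $(G'_i)^c$, and applying Theorem~2.13 once more gives $p$-step linearity of $I(G'_i)$.

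Finally, adjoining the variables $v_1,\dots,v_s$ to $I(G'_i)$ is handled one at a time: for each variable $v$, setting $v = 0$ in an edge ideal corresponds to deleting that vertex from the graph, which preserves both gap-freeness and the induced-cycle condition, so the short exact sequence
\[
0 \longrightarrow S/(L : v)(-1) \xrightarrow{\;v\;} S/L \longrightarrow S/(L+(v)) \longrightarrow 0
\]
allows $p$-step linearity to propagate by induction on $s$. This shows each $J_i$ is $p$-step linear in degree~$2$, and by the first reduction $I_4$ is $p$-step linear in degree~$4$. The main obstacle is purely bookkeeping: Lemmas~2.8--2.10 are phrased in terms of Castelnuovo--Mumford regularity, whereas the conclusion demands control of graded Tor at each homological degree $i \leq p$ separately, so one must re-trace their proofs on the graded Tor long exact sequences rather than simply quoting the regularity bounds. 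The combinatorial input needed to run that re-tracing --- that the relevant colon ideals remain gap-free and inherit the induced-cycle obstruction from $G$ --- is exactly what Lemmas~3.3--3.5 already supply.
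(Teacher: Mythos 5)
Your proposal is correct and follows essentially the same route as the paper: it reduces to the colon ideals $(I_4,e_1,\dots,e_{k-1}):(e_k)=(I_4:e_k)+(\text{variables})$ via gap-freeness, identifies $(I_4:e)$ as the edge ideal of a gap-free graph whose complement inherits the induced-cycle obstruction from $G^c$ by Lemmas 3.3--3.5 and Theorem 2.13, and then tracks graded Betti numbers through the chain of short exact sequences. Your closing remark about needing to re-trace the regularity lemmas at the level of graded Tor is exactly what the paper does explicitly with its displayed sequence of short exact sequences.
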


\begin{proof}
 Let $e$ be any edge in $G$ and $G'$ be the graph associated with $(I_4:e)$. By previous lemma and the Lemma 2.12 $G'^c$ does not have an induced cycle of length less than $p+3$ that is not a triangle. Hence $(I_4:e)$ has a minimal free resolution that is linear up to step $p$.\\
 
 Next we observe as $G$ is gap free, for any two different edges $e$ and $f$ in $G$, who does not share a common vertex $(f:e) =(f) \subseteq (I_4:e)$. Hence either $(f:e)$ is generated by a variable or it is contained in $(I_4:f)$. So for different edges $e_1,...,e_k,e$ of $G$, $(I_4,e_1,...,e_k):(e)$ is $(I_4:e)+J$ where $J$ is an ideal generated by some variables.\\
 
  Now let $G'$ be the graph of the $(I_4:e)$ and the $G''$ be the graph obtained from $G$ by deleting the vertices in $J$. As $G'^c$ does not have an induced cycle of length less than $p+3$ that is not a triangle, $G''^c$ does not have any such cycle too. Hence $\beta_{ij}((I_4:e)+J)=\beta_{ij}(I(G''))=0$ for every pair $(i,j)$ such that $j\neq i+2$ and $i\leq p$. Now to finish the proof we observe the following sequence of short exact sequences, here we assume $E(G)=\{e_1,...,e_l\}$ :
  $$0\longrightarrow \frac{S}{(I_4:e_1)} (-2) \overset{.e_1} \longrightarrow \frac{S}{I_4} \longrightarrow \frac{S}{(I_4,e_1)} \longrightarrow 0$$
  $$0\longrightarrow \frac{S}{((I_4,e_1):(e_2))} (-2) \overset{.e_2} \longrightarrow \frac{S}{(I_4,e_1)} \longrightarrow \frac{S}{(I_4,e_1,e_2)} \longrightarrow 0$$
  $$\vdots$$
  $$0\longrightarrow \frac{S}{((I_4,e_1,...,e_{l-1}):(e_l))} (-2) \overset{.e_l} \longrightarrow \frac{S}{(I_4,e_1,...,e_{l-1})} \longrightarrow \frac{S}{I} \longrightarrow 0$$\\
  
  As $\beta_{ij}((I_4,e_1,...,e_{k-1}):(e_k))$ is zero for all $i\leq p$ and $j\neq i+2$, from these short exact sequences we conclude that $\beta_{ij}(I_4)=0$ for all pair $(i,j)$ where $i \leq p$ and $j \leq i+4$. But $I_4$ is a homogeneous monomial ideal generated in degree four so $\beta_{ij}(I_4)=0$ if $j <4$. Hence $\beta_{ij}(I_4)=0$ for all pair $(i,j)$ where $i \leq p$ and $j \leq i+4$. This proves the result.\\
\end{proof}
   
\begin{theorem}
If $G$ is gap free and cricket free then $I_4$ has linear minimal free resolution.
\end{theorem}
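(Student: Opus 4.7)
The strategy parallels Theorem 3.6, replacing the linearity hypothesis with $\reg(I(G)) \leq 3$ from Theorem 2.12. Running the iterated short exact sequences as there yields
$$\reg(I_4) \leq \max\left\{\reg(I),\; \max_k\{\reg\left((I_4, e_1, \ldots, e_{k-1}):e_k\right) + 2\}\right\}.$$
The first term is at most $3$ by Theorem 2.12, so it suffices to prove each colon ideal on the right has regularity at most $2$. By the argument in Theorem 3.6, $((I_4, e_1, \ldots, e_{k-1}) : e_k) = (I_4 : e_k) + J$ with $J$ generated by variables; writing $G'$ for the graph of $(I_4 : e_k)$ from Lemma 3.3 and $G''$ for the induced subgraph of $G'$ obtained by removing the vertices in $J$, this ideal is $I(G'')$. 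Since chordality of the complement is inherited by induced subgraphs, Fr\"oberg's theorem (Theorem 2.11) reduces the task to showing that $(G')^c$ is chordal.

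By Lemma 3.4, $(G')^c$ has no induced $C_4$. Suppose for contradiction that $w_1 \cdots w_n$ is an induced $n$-cycle in $(G')^c$ with $n \geq 5$; by Lemma 3.5 it is also an induced $n$-cycle in $G^c$, and $\{w_1, \ldots, w_n\} \subseteq V(G') \subseteq V(G) \setminus \{x, y\}$ (where $e_k = xy$). Put $A = \{w_i : w_i x \in E(G)\}$ and $B = \{w_i : w_i y \in E(G)\}$, and label each $w_i$ by $a$, $b$, $c$, or $n$ according to membership in $A \setminus B$, $B \setminus A$, $A \cap B$, or $V(G) \setminus (A \cup B)$.

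Three combinatorial constraints then force a contradiction:
\begin{itemize}
\item[(i)] (gap-freeness applied to the disjoint pair $(xy,\; w_i w_{i+j})$ for $2 \leq j \leq n-2$, noting that each such $w_i w_{i+j}$ is an edge of $G$) any two $n$-labeled vertices are cycle-adjacent in $w_1 \cdots w_n$; in particular at most two vertices carry label $n$.
\item[(ii)] ($w_i w_{i+1}$ is not a crossing edge in $G'$) for every cyclically consecutive pair, it is not the case that one vertex lies in $A$ and the other in $B$; equivalently, $a$- and $b$-labels are never adjacent, and every $c$-label forces both of its cyclic neighbors to be labeled $n$.
\item[(iii)] (cricket-freeness applied to the potential cricket centered at $x$ with leaves $y, w_i, w_{i+1}, w_{i+2}$, and its $y$-analogue) no three consecutive $w_i$'s are all labeled $a$, nor all labeled $b$.
\end{itemize}

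By (ii), a $c$-label would force two $n$-labels at cyclic distance $2$, contradicting (i); hence no $c$-labels occur. Removing the at most two $n$-labels (cycle-adjacent when two) from the $n$-cycle leaves a contiguous arc of length at least $n - 2$ of non-$n$ labels, which by (ii) is monochromatic in $\{a, b\}$ and by (iii) has length at most $2$. Since $n - 2 \geq 3$ for $n \geq 5$, this is a contradiction. Thus $(G')^c$ is chordal, hence so is $(G'')^c$, whence $I(G'')$ has linear resolution; feeding this back through the exact sequence argument yields $\reg(I_4) \leq 4$, and $\reg(I_4) = 4$ since $I_4$ is generated in degree four. The crux is constraint (i): to obtain it one must apply gap-freeness to all non-cycle pairs $w_i w_{i+j}$, not just $j = 2$; once this is in hand the elimination of $c$-labels is immediate and the remainder collapses to a pigeonhole on the non-$n$ arc.
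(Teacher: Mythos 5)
Your proposal is correct, and its skeleton is the same as the paper's: the identical reduction via Lemma 2.10 and Theorem 2.12 to showing $\reg(I_4:e)\leq 2$, i.e.\ that $(G')^c$ is chordal, using Lemmas 3.3--3.5 to transport a hypothetical induced cycle of length $\geq 5$ from $(G')^c$ to $G^c$. Where you diverge is in how the contradiction is extracted from that cycle. The paper runs an explicit chase: it uses the edges $w_1w_3$, $w_2w_n$, $w_3w_n$ of $G$ together with gap-freeness and the ``new edge'' rule of $G'$ to force $w_1,w_2,w_3$ to all be neighbors of $x$ and none of $w_1,w_2,w_3$ to be neighbors of $y$, and then exhibits the induced cricket $\{y,w_2,x,w_1,w_3\}$. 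Your argument globalizes this: the four-way labeling, constraint (i) from gap-freeness against all non-cycle chords $w_iw_{i+j}$, constraint (ii) from the crossing-edge rule of $G'$, and constraint (iii) --- which is precisely the cricket the paper finds, in the case $i=1$ --- followed by a pigeonhole on the surviving arc. The two proofs bottom out at the same obstruction, but yours buys a cleaner separation of which hypothesis is responsible for which restriction (and, e.g., never needs the paper's intermediate step that ``one of $w_2,w_n$ must be a neighbor of $x$''), at the cost of needing the slightly stronger observation (i) about \emph{all} pairs of $n$-labeled vertices rather than just the specific chords the paper inspects. Both are complete; I find your elimination of the $c$-labels via (i) and (ii) a genuinely tidier way to handle the case distinctions that the paper compresses into ``without loss of generality.''
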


\begin{proof}  
    We observe that since $G$ is gap free, for any two different edges $e$ and $f$ in $G$, who does not share a common vertex $(f:e) =(f) \subseteq (I_4:e)$. Hence either $(f:e)$ is generated by a variable or it is contained in $(I_4:f)$. So for different edges $e_1,...,e_k,e$ of $G$, $(I_4,e_1,...,e_k):(e)$ is $(I_4:e)+J$ where $J$ is an ideal generated by some variables. Hence in light of Lemmma 2.10 and Theorem 2.12 it is enough to show that for every edge $e$ the $\text{reg}(I_4:e) \leq 2$ that is if $G'$ is the graph associated with $(I_4:e)$ then $G'^c$ is chordal. \\\ 
    
   We know from Lemma 3.4 that $G'$ is gap free. If $w_1....w_n$ is an induced cycle in $G'^c$ with $n\geq 5$, then it is also an induced cycle in $G^c$ by Lemma 3.5. Then either $w_1$ or $w_3$ is a neighbor of $x$ or neighbor of $y$ else $w_1 w_3$ and $e$ forms a gap in $G$, a contradiction. Without loss of generality, we may assume $w_1$ is a neighbor of $x$. Now every neighbor of $y$ is connected to every neighbor of $x$ in $G'$ if they are not same . As neither $w_1 w_n$, nor $w_1 w_2$ is an edge in $G'$, neither of $w_2$ and $w_n$ are neighbors of $y$ in $G$. So one of them has to be neighbor of $x$, as $G$ is gap free. Again, without loss of generality, we may assume $w_2$ is a neighbor of $x$. Next we consider $w_3 w_n$. As $w_1$ and $w_2$ are neighbors of $x$ and neither $w_1w_n$ nor $w_2w_3$ are edges in $G'$, so neither $w_3$ nor $w_n$ can be neighbor of $y$. Then either $w_3$ or $w_n$ of them has to be a neighbor of $x$. Without loss of generality we may assume $w_3$ is a neighbor of $x$. Notice that $y$ is not connected to $w_1$ in $G$ as that will force $w_2$, a neighbor of $x$ to be connected to $w_1$ in $G'$ leading to a contradiction. Hence $\{y,w_2,x,w_1,w_3 \}$ forms a cricket leading to contradiction. \\
  
  Hence by Theorem 2.11 $\text{reg } (I_4:e)=2$ and our result follows from theorem 2.10.\\
\end{proof}

 Next two lemmas will lead us to the proof of the fact that for any gap free and claw free graph any path ideal has linear minimal free resolution .
 
\begin{lemma}
Let $G$ be gap free and claw free and $I_{t+1} \neq 0$ for some $t$. If $e \neq f$ are two generators of $I_t$  then either $(e:f)$ is generated by a variable or $(e:f) \subseteq (I_{t+1} : f)$.
\end{lemma}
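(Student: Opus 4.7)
My plan is to translate the statement to a purely combinatorial question and then case-analyze. Since $e,f$ are distinct squarefree monomials corresponding to $t$-paths of $G$, the colon ideal $(e:f)$ is principal, generated by $m := e/\gcd(e,f) = \prod_{v \in V(e)\setminus V(f)} v$. Its degree is $|V(e)\setminus V(f)|$. If $\deg m \leq 1$ we are in the first alternative of the lemma. Otherwise $|V(e)\setminus V(f)|\geq 2$, equivalently $|V(e)\cup V(f)|\geq t+2$, and the inclusion $(e:f) \subseteq (I_{t+1}:f)$ is equivalent to $mf = \mathrm{lcm}(e,f) \in I_{t+1}$. Since the support of $mf$ is $V(e)\cup V(f)$, this amounts to finding a $(t+1)$-path of $G$ with vertices inside $V(e)\cup V(f)$.

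Write $e\colon x_1 - x_2 - \cdots - x_t$ and $f\colon y_1 - y_2 - \cdots - y_t$. First I would handle the disjoint case $V(e)\cap V(f)=\emptyset$: gap-freeness applied to $x_1 x_2$ and $y_1 y_2$ gives some cross-edge $x_i y_j$ with $i,j \in \{1,2\}$, and in each sub-case one splices a short segment of $e$ via this edge onto a tail of $f$ to obtain a path on $t+1$ vertices. For example, if $x_2 y_1\in E(G)$, then $x_1,x_2,y_1,y_2,\ldots,y_{t-1}$ is such a path.

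When $V(e)\cap V(f)\neq\emptyset$, choose a common vertex $v = x_i = y_j$ and consider the four ``mixed'' concatenations through $v$ that join a prefix or suffix of $e$ to a prefix or suffix of $f$. These walks visit respectively $t+i-j$, $t+j-i$, $i+j-1$, and $2t-i-j+1$ vertices. At least one of these counts is $\geq t+1$ unless $v$ is interior to both paths and $i = j = (t+1)/2$ (forcing $t$ odd). Assuming distinctness of the vertices visited, a suitable sub-walk gives the required $(t+1)$-path. In the remaining exceptional sub-case, $v$ has four distinct neighbors $x_{i\pm1}, y_{j\pm1}$, all lying in $V(e)\cup V(f)$; claw-freeness applied to $v$ together with any three of these neighbors yields an extra edge between two of them, which I use as a detour to augment a concatenation by one vertex and obtain the desired $(t+1)$-path.

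The main obstacle will be the bookkeeping when $|V(e)\cap V(f)|\geq 2$, since the naive concatenation walks may revisit common vertices. Here I would choose $v$ to be an \emph{extremal} common vertex: e.g., pick $v = x_i = y_j$ with $i$ minimal among the common vertices, which guarantees $x_1, \ldots, x_{i-1} \in V(e)\setminus V(f)$. A parallel extremality choice on the $f$-side (together with a short argument ruling out the remaining possibility that a concatenation still revisits) reduces matters to the single-overlap analysis above. A final sub-case enumeration of which pair of neighbors of $v$ receives the claw-freeness-produced edge then yields a $(t+1)$-path inside $G[V(e)\cup V(f)]$ in every configuration, completing the proof.
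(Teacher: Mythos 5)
Your setup is right: $(e:f)=(m)$ with $m=e/\gcd(e,f)$, and when $\deg m\geq 2$ the desired inclusion is exactly the statement that some $(t+1)$-path of $G$ has all its vertices in $V(e)\cup V(f)$. Your disjoint case is fine, and your single-common-vertex analysis (four concatenation lengths $t+i-j$, $t+j-i$, $i+j-1$, $2t-i-j+1$, with the unique exception $i=j=(t+1)/2$ handled by claw-freeness at $v$) can be made to work. But the reduction of the general case to the single-overlap case is a genuine gap, and it is where essentially all of the difficulty of this lemma lives. Choosing $v=x_i=y_j$ with $i$ minimal only guarantees that the \emph{two} concatenations using the prefix $x_1,\dots,x_i$ are honest paths; the other two may revisit vertices, and the analogous extremal choices on the other side or at the other end produce a different list of honest concatenations whose lengths can \emph{all} be at most $t$ in configurations far from your single exceptional one. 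Concretely, take $t=5$, $e=x_1x_2x_3x_4x_5$, $f=y_1y_2y_3y_4y_5$ with $x_2=y_2$, $x_4=y_4$ and all other vertices distinct. Here $\deg m=3$, and in the union of the two paths every vertex other than the two hubs $x_2,x_4$ is adjacent only to hubs, so no walk using only edges of $e$ and $f$ visits more than $5$ distinct vertices: every concatenation, extremal or not, fails. A $6$-path must be built from \emph{extra} edges of $G$ forced by gap- and claw-freeness (e.g.\ claw-freeness at $x_2$ applied to $x_1,x_3,y_1$), and your proposal contains no argument for this; the ``detour'' step you describe is tailored to the case of a single interior common vertex with $i=j=(t+1)/2$ and does not address multi-overlap configurations. (Even in that exceptional case, you should note that the edge produced by claw-freeness might be $x_{i-1}x_{i+1}$, which does not lengthen any concatenation, so the triple to which you apply claw-freeness must be chosen with care.)

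For comparison, the paper avoids concatenating the two paths altogether: it fixes $f=x_1\cdots x_t$ and studies how the individual variables dividing $m$ attach to $f$. Claw-freeness shows that any $a\mid m$ adjacent to one of $x_1,x_2,x_{t-1},x_t$ already gives $af\in I_{t+1}$; gap-freeness (applied to $x_1x_2$ and an edge inside $m$) disposes of the case where an edge of $G$ divides $m$; a variable adjacent to two consecutive $x_i$'s can be inserted directly; and the residual case, where two variables $a,b$ of $m$ each attach to two non-adjacent interior vertices of $f$, is resolved by a lengthy interplay of gap- and claw-freeness that manufactures enough chords of $f$ to reroute a $(t+1)$-path. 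Something of that nature is what your multi-overlap case requires; as written, your proof establishes the lemma only when $|V(e)\cap V(f)|\leq 1$.
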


\begin{proof}
  Assume $(e:f)$ is not generated by a variable. That means $m=\frac{e}{\text{gcd} (e,f)}$ is a monomial of degree greater than or equal to $2$. Let $f=x_1....x_t$. First observe that if $a$ is a variable such that $a|m$ and $a x_i$ is an edge in $G$ for any $i \in \{1,2,t-1,t\}$. Then $af \in I_{t+1}$ as $G$ is claw free. This is clear if $ax_1$ or $ax_t$ is an edge as in that case $ax_1....x_t$ or $ax_t....x_1$ will be in $I_{t+1}$. If $ax_2$ is an edge then for $ax_2 x_1 x_3$ to avoid being a claw either $ax_1$ or $ax_3$ or $x_1 x_3$ is an edge. In first case it is again clear. In the second case $a f \in I_{t+1}$ as $x_1 x_2 a x_3...x_t$ forms a $t+1$ path. In third case $ax_2 x_1 x_3...x_t$ forms a $t+1$ path. The other cases follow by symmetry. In all these cases $(e:f) \subseteq (a) \subseteq (I_{t+1}:f)$.\\
  
   If there is an edge $h|m$ then as $G$ is gap free considering $x_1x_2$ and $h$ we get that there is a variable $a$ dividing $m$ such that $ax_1$ or $ax_2$ is an edge and hence we are done by arguments of previous paragraph; also if there exists a variable $a$ dividing $m$ such that for some $i$ both $ax_i$ and $ax_{i+1}$ are edges in $G$ then $x_1...x_iax_{i+1}...x_t$ is a generator of $I_{t+1}$ and $(e:f) \subseteq (a) \subseteq (I_{t+1}:f)$.\\
   
   So we may assume neither of the above holds. As $e| mf$, there exists two variables $a,b$ such that there exists $i,i',j,j', i\neq i', j \neq j', i<i',j<j',i<j$ with $a x_i,ax_{i'}, b x_j,bx_{j'}$ are edges in $G$. As before if $i,i',j$ or $j'$ belongs to $\{1,2,t-1,t\}$ the result follows. If $i=j$, for $x_{i-1}, x_i,a,b$ to avoid being a claw we must have either $ax_{i-1}$ or $bx_{i-1}$ is an edge in $G$ contradicting the assumption; as no edge divides $m$, $ab$ can't be an edge. Arguing similarly we may assume $\{i,i'\} \cap \{j,j'\} = \emptyset$.\\
   
   We may assume $x_1$ or $x_t$ is not connected to any vertex from $\{x_i,x_{i'},x_j,x_j'\}$. If $x_1 x_i$ is an edge in $G$ then $ax_i x_1..x_{i-1}x_{i+1}..x_t$ is a $t+1$ path and we are done. The other cases also follow similarly. Since $G$ is claw free both $x_{i-1} x_{i+1}$ and $x_{j-1} x_{j+1}$ are edges in $G$ for $a,x_i,x_{i-1},x_{i+1}$ and $b,x_j,x_{j-1},x_{j+1}$ to avoid being claw respectively. Similarly one may assume $x_{i'-1}x_{i'+1}$ and $x_{j'-1}x_{j'+1}$ are edge in $G$. As $G$ is gap free considering $ax_i$ and $x_1x_2$ we may assume $x_2x_i$ is an edge. Arguing similarly we may assume $x_l x_{l'}$ is an edge for every $l \in \{2,t-1\}$ and every $l' \in \{i,i',j,j'\}$. As $G$ is claw free for $x_1, x_2, x_i, x_{j'}$ to avoid being a claw $x_i x_{j'}$ has to be an edge in $G$; otherwise $x_1x_i$ or $x_1 x_{j'}$ will be an edge. Since $i < j <j'$, $x_{j'} \neq x_{i+1}$.Considering $a,x_i,x_{i+1},x_{j'}$ one concludes $x_{j'} x_{i+1}$ is an edge.\\
 
 If $i<i'<j'$ then $x_1....x_iax_{i'}x_{j'}x_{i+1}x_{i+2}...x_{i'-1}
 x_{i'+1}....x_{j'-1}x_{j'+1}...x_t$ forms a path and
if $i<j'<i'$ then $x_1....x_iax_{i'}x_{j'}x_{i+1}x_{i+2}...x_{j'-1}
 x_{j'+1}....x_{i'-1}x_{i'+1}...x_t$ forms a path. Hence $a$ is a generator of $(I_{t+1}:e)$ in both cases.\\

\end{proof}

\begin{lemma}
Let $G$ be gap free and claw free and $I_t \neq 0$. If $e$ is a generator of $I_t$ for any $t$ then $(I_{t+1} :e)$ is generated by variables. 
\end{lemma}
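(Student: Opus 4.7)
The plan is to argue by contradiction: suppose $m$ is a minimal monomial generator of $(I_{t+1}:e)$ with $\deg m \geq 2$, and derive a contradiction by producing a variable $a$ dividing $m$ with $ae \in I_{t+1}$. Since $(I_{t+1}:e) = \sum_q (q:e)$ as $q$ ranges over $(t+1)$-path generators of $I_{t+1}$ and each $(q:e)$ is principal, I may fix a $(t+1)$-path $f = y_0 y_1 \cdots y_t$ with $m = f/\gcd(f,e)$; thus $f$ shares at most $t-1$ vertices with $e$.

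I would first carry out three reductions mirroring the opening moves of the proof of Lemma 3.7. (i) If some $a \mid m$ is adjacent to $x_i$ for some $i \in \{1,2,t-1,t\}$, an explicit $(t+1)$-path construction (for $i=1$, the path $a x_1 x_2 \cdots x_t$; for $i=2$, claw-freeness at $x_2$ forces one of $ax_1$, $ax_3$, $x_1 x_3$ to be an edge, and each yields a $(t+1)$-path on $\{a\}\cup V(e)$) gives $ae \in I_{t+1}$. (ii) If some $a \mid m$ is adjacent to both $x_i$ and $x_{i+1}$, then $x_1 \cdots x_i\, a\, x_{i+1} \cdots x_t$ is a $(t+1)$-path. (iii) If an edge $uv$ of $G$ divides $m$, gap-freeness applied to $uv$ and $x_1 x_2$ forces one of $u,v$ to be adjacent to $\{x_1,x_2\}$, contradicting (i). In each case a variable of $m$ would lie in $(I_{t+1}:e)$, contradicting minimality of $m$, so I may assume none of (i)--(iii) holds.

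Under these reductions, no two elements of $\operatorname{supp}(m)$ are adjacent in $G$, and no old vertex $x_p$ in $f$ can have two new $f$-neighbors $a,b$: the set $\{x_p,a,b,x_{p-1}\}$ would be a claw, since $ab$, $ax_{p-1}$, $bx_{p-1}$ are all non-edges (by (iii) and (ii), with $x_{p-1}$ existing because $p\in\{3,\dots,t-2\}$ by (i)). I then pick a new vertex $a$ interior to $f$, so it has two old $f$-neighbors $x_p,x_q$ with $p,q \in \{3,\dots,t-2\}$ and $|p-q|\geq 2$. Applying gap-freeness to $ax_p$ (and to $ax_q$) against $x_1 x_2$ and $x_{t-1} x_t$ forces each of $x_p,x_q$ to be adjacent to some vertex of $\{x_1,x_2\}$ and of $\{x_{t-1},x_t\}$. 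If $x_p$ (or $x_q$) is adjacent to $x_1$ or $x_t$, the $(t+1)$-path $a x_p x_1 x_2 \cdots x_{p-1} x_{p+1} \cdots x_t$ (using the claw-free edge $x_{p-1}x_{p+1}$), or its symmetric variant, yields $ae \in I_{t+1}$. In the remaining case where $x_p,x_q$ are adjacent only to $x_2$ and $x_{t-1}$, claw-freeness on $\{x_2,x_1,x_p,x_q\}$ forces the edge $x_p x_q$, after which the rerouting construction used at the end of the proof of Lemma 3.7 supplies the further shortcut edges needed to build a $(t+1)$-path on $\{a\}\cup V(e)$. The boundary sub-case, in which every new vertex of $f$ lies at an endpoint of $f$, is handled analogously using its unique $f$-neighbor and the same gap-free/claw-free deductions. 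The main obstacle will be this final sub-case, which requires a careful cascade of claw-free and gap-free edge propagations directly parallel to the last paragraph of Lemma 3.7.
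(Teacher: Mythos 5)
Your opening is correct and is essentially the paper's: you reduce to a minimal generator $m$ with $\deg m\geq 2$ coming from a single $(t+1)$-path $f$ with $f\mid me$, and your reductions (i)--(iii), together with the observation that no old vertex can have two new $f$-neighbours, reproduce the first three paragraphs of the paper's proof verbatim in content. The gap is in the endgame. The paper does not run the final construction off a single interior new vertex: it extracts \emph{two} variables $a,b$ dividing $m$ with \emph{four} indices $i<i'$, $j<j'$ such that $ax_i,ax_{i'},bx_j,bx_{j'}$ are all edges, and the closing $(t+1)$-path $x_1\cdots x_i\,a\,x_{i'}\,x_{j'}\,x_{i+1}\cdots$ re-enters the original path only through cross edges of the form $x_{i'}x_{j'}$ and $x_{j'}x_{i+1}$, i.e.\ edges joining a neighbour of $a$ to a neighbour of $b$, manufactured from claws that involve the \emph{second} variable's neighbour $x_{j'}$. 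In your single-hub version, after the claw at $x_2$ the complete list of established edges is $ax_p$, $ax_q$, $x_px_q$, $x_2x_p$, $x_2x_q$, $x_{t-1}x_p$, $x_{t-1}x_q$, $x_{p-1}x_{p+1}$, $x_{q-1}x_{q+1}$ plus the edges of $e$, and this edge set contains no Hamiltonian path on $\{a\}\cup V(e)$: once you traverse $x_1\cdots x_p\,a\,x_q$ there is no established edge returning to the untouched block $x_{p+1},\dots,x_{q-1}$, and every other routing fails for the same reason. So ``the rerouting construction used at the end of the proof of Lemma 3.7'' cannot simply be invoked; the extra shortcut edges it needs come precisely from the second new vertex, which your argument never brings into play.

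The boundary sub-case is not a side remark but the place where this failure is unavoidable: if $\deg m=2$ and both new vertices are endpoints of $f$, each has only \emph{one} known old neighbour, so the four-index configuration does not exist at all, and ``the same gap-free/claw-free deductions'' land you exactly in the stuck position above (you get $x_ix_j$, $x_2x_i$, $x_2x_j$, $x_{t-1}x_i$, $x_{t-1}x_j$ and the two local shortcuts, and nothing more). A complete proof must either rule this configuration out or exploit the additional edges $z_lz_{l+1}$ supplied by the interior of $f$, which is a genuinely different computation from the last paragraph of Lemma 3.7. (The paper itself asserts the existence of two variables each with two old neighbours without justifying it, so you have faithfully reproduced the approach including its weakest point; but as a standalone argument, deferring this case to ``analogous'' reasoning leaves the lemma unproved.)
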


\begin{proof}
 Let $m$ be a minimal monomial generator of $(I_{t+1}:e)$. So $me \in I_{t+1}$ and for every $m'|m, m' \neq m$, $m'e \notin I_{t+1}$. Hence there is a $t+1$ path $f$ such that $f|em$.\\
 
 Let $f=x_1....x_{t+1}$. First observe that if $a$ is a variable such that $a|m$ and $a x_i$ is an edge in $G$ for any $i \in \{1,2,t,t+1\}$. Then $af \in I_{t+1}$ as $G$ is claw free. This is clear if $ax_1$ or $ax_{t+1}$ is an edge as in that case $ax_1....x_t$ or $ax_{t+1}....x_2$ will be in $I_{t+1}$. If $ax_2$ is an edge then for $ax_2 x_1 x_3$ to avoid being a claw either $ax_1$ or $ax_3$ or $x_1 x_3$ is an edge. In first case it is again clear. In the second case $a f \in I_{t+1}$ as $x_1 x_2 a x_3...x_t$ forms a $t+1$ path. In third case $ax_2 x_1 x_3...x_t$ forms a $t+1$ path. This forces $m=a$ The other cases follow by symmetry.\\
   If there is an edge $h|m$ then as $G$ is gap free considering $x_1x_2$ and $h$ we get that there is a variable $a$ dividing $m$ such that $ax_1$ or $ax_2$ is an edge and hence we are done by arguments of previous paragraph; also if there exists a variable $a$ dividing $m$ such that for some $i$ both $ax_i$ and $ax_{i+1}$ are edges in $G$ then $x_1...x_iax_{i+1}...x_t$ is a generator of $I_{t+1}$ and we are done.\\
 
  So we may assume neither of the above holds. As $f|me$, either $m$ has degree one in which case we are done or there exists two variables $a,b$ such that there exists $i,i',j,j', i\neq i', j \neq j', i<i',j<j',i<j$ with $a x_i,ax_{i'}, b x_j,bx_{j'}$ are edges in $G$. As before if $i,i',j$ or $j'$ belongs to $\{1,2,t,t+1\}$ the result follows. If $i=j$, for $x_{i-1}, x_i,a,b$ to avoid being a claw we must have either $ax_{i-1}$ or $bx_{i-1}$ is an edge in $G$; as no edge divides $m$, $ab$ can't be an edge contradicting the assumpion. Arguing similarly we may assume $\{i,i'\} \cap \{j,j'\} = \emptyset$.\\	
 
    We may assume $x_1$ or $x_t$ is not connected to any vertex from $\{x_i,x_{i'},x_j,x_j'\}$. If $x_1 x_i$ is an edge in $G$ then $ax_i x_1..x_{i-1}x_{i+1}..x_t$ is a $t+1$ path and we are done. The other cases also follow similarly. Since $G$ is claw free both $x_{i-1} x_{i+1}$ and $x_{j-1} x_{j+1}$ are edges in $G$ for $a,x_i,x_{i-1},x_{i+1}$ and $b,x_j,x_{j-1},x_{j+1}$ to avoid being claw respectively. Similarly one may assume $x_{i'-1}x_{i'+1}$ and $x_{j'-1}x_{j'+1}$ are edge in $G$. As $G$ is gap free considering $ax_i$ and $x_1x_2$ we may assume $x_2x_i$ is an edge. Arguing similarly we may assume $x_l x_{l'}$ is an edge for every $l \in \{2,t\}$ and every $l' \in \{i,i',j,j'\}$. As $G$ is claw free for $x_1, x_2, x_i, x_{j'}$ to avoid being a claw $x_i x_{j'}$ has to be an edge in $G$; otherwise $x_1x_i$ or $x_1 x_{j'}$ will be an edge. Since $i < j <j'$, $x_{j'} \neq x_{i+1}$.Considering $a,x_i,x_{i+1},x_{j'}$ one concludes $x_{j'} x_{i+1}$ is an edge.\\
 
 If $i<i'<j'$ then $x_1....x_iax_{i'}x_{j'}x_{i+1}x_{i+2}...x_{i'-1}
 x_{i'+1}....x_{j'-1}x_{j'+1}...x_t$ forms a path and
if $i<j'<i'$ then $x_1....x_iax_{i'}x_{j'}x_{i+1}x_{i+2}...x_{j'-1}
 x_{j'+1}....x_{i'-1}x_{i'+1}...x_t$ forms a path. Hence $a$ is a generator of $(I_{t+1}:e)$ in both cases.\\
 
\end{proof}

 The main theorem follows from these two lemmas.
 
\begin{theorem}
Let $t$ be an integer greater than two. If $G$ is gap free and claw free and $I_t \neq 0$ then $I_t$ has linear minimal free resolution. 
\end{theorem}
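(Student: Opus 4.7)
The plan is induction on $t$, leveraging Lemmas 3.7 and 3.8 as the combinatorial heart and assembling the bound via the iterated short-exact-sequence formula in Lemma 2.8. Observe first that every $(t+1)$-path $x_1\cdots x_{t+1}$ is divisible by the $t$-path $x_1\cdots x_t$, so $I_{t+1}\subseteq I_t$ and $I_{t+1}\neq 0$ forces $I_t\neq 0$; this is what lets the induction propagate along the hypothesis $I_t\neq 0$.

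For the base case $t=3$, I would apply Theorem 3.1 to get $\reg(I_3)\leq \max\{\reg(I),3\}$. Since a claw-free graph is cricket-free, Theorem 2.13 yields $\reg(I(G))\leq 3$, and hence $\reg(I_3)\leq 3$; combined with the fact that $I_3$ is generated in degree $3$, this gives a linear resolution by Observation 2.6.

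For the inductive step, I assume $\reg(I_t)=t$ and let $e_1,\dots,e_k$ enumerate the monomial generators of $I_t$ (all of degree $t$). I would apply Lemma 2.8 to $J=I_{t+1}\subseteq I_t$, obtaining
$$\reg(I_{t+1})\leq \max_{i}\Bigl\{\reg\bigl((I_{t+1}:e_i)+(e_1:e_i)+\cdots+(e_{i-1}:e_i)\bigr)+t,\ \reg(I_t)\Bigr\}.$$
Lemma 3.7 tells me each $(e_j:e_i)$ is either generated by a single variable or already contained in $(I_{t+1}:e_i)$, while Lemma 3.8 tells me $(I_{t+1}:e_i)$ itself is generated by variables. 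Consequently the inner colon ideal is generated by variables and has regularity $1$. Substituting, $\reg(I_{t+1})\leq\max\{1+t,\ t\}=t+1$, which matches the lower bound $t+1$ coming from the generating degree of $I_{t+1}$, yielding equality and hence a linear minimal free resolution by Observation 2.6.

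The main obstacle is really the combinatorial content already packaged in Lemmas 3.7 and 3.8; these are the places where the combination of claw-freeness and gap-freeness is used through a delicate case analysis on which positions of a $t$-path the relevant variables touch, on how would-be claws force extra edges, and on how gaps together with an existing edge produce a variable witness. Once those two lemmas are available, the main theorem is a clean assembly via Lemma 2.8 and the reduction $I_{t+1}+(e_1,\dots,e_k)=I_t$, with no further combinatorial input needed.
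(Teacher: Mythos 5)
Your proposal is correct and follows essentially the same route as the paper: the base case $t=3$ via Theorem 3.2 combined with the regularity bound for gap-free, cricket-free graphs, and the inductive step by applying the iterated colon bound (Lemma 2.10) to $I_{t+1}\subseteq I_t$, using the two preceding lemmas to see that each ideal $(I_{t+1}:e_i)+(e_1:e_i)+\cdots+(e_{i-1}:e_i)$ is generated by variables and hence has regularity $1$. Your explicit remark that $I_{t+1}\subseteq I_t$ and $I_{t+1}\neq 0$ forces $I_t\neq 0$ is a point the paper leaves implicit, but otherwise the arguments coincide.
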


\begin{proof}
 For $t=3$ this follows the Lemma 3.2 and the Theorem 2.12 as a claw free graph is automatically cricket free. Let us assume by induction the result holds for $(t-1)$ for some $t \geq 4$. If $m_1,....,m_k$ are $k$ different monomials representing $(t-1)$-paths then by previous two lemmas $(I_t,m_1,...,m_{k-1}):(m_k)$ is an ideal generated by variables and hence has regularity $1$. The result now follows from the Lemma 2.10.
\end{proof}

\textbf{Acknowledgements.} The author is very grateful to his advisor C. Huneke for constant support, valuable ideas and suggestions throughout the project. The author thanks A. Alilooee for valuable discussions. The author is greatful to H.T. H\`a and A. Van Tuyl for valuable suggestions.

\end{document}